\def\@cite#1#2{{\m@th\upshape\bfseries%
[{#1\if@tempswa{\m@th\upshape\mdseries, #2}\fi}]}}
\theoremstyle{plain}
\newtheorem{thm}{Theorem}[section]
\newtheorem{cor}[thm]{Corollary}
\newtheorem{prop}[thm]{Proposition}
\newtheorem{lem}[thm]{Lemma}
\theoremstyle{definition}
\newtheorem*{notation}{Notation}
\newtheorem{defn}[thm]{Definition}
\theoremstyle{remark}
\newtheorem{rem}[thm]{Remark}
\numberwithin{equation}{subsection}
\renewcommand{\bold}[1]{\medskip \noindent {\bf #1 }\nopagebreak}
\newcommand{\nc}{\newcommand}
\newcommand{\rnc}{\renewcommand}
\newcommand{\bk}{{\mathbf{k}}}
\nc\bA{\mathbb{A}}
\nc\bB{\mathbb{B}}
\nc\bC{\mathbb{C}}
\nc\bD{\mathbb{D}}
\nc\bE{\mathbb{E}}
\nc\bF{\mathbb{F}}
\nc\bG{\mathbb{G}}
\nc\bH{\mathbb{H}}
\nc\bI{\mathbb{I}}
\nc{\bJ}{\mathbb{J}}
\nc\bK{\mathbb{K}}
\nc\bL{\mathbb{L}}
\nc\bM{\mathbb{M}}
\nc\bN{\mathbb{N}}
\nc\bO{\mathbb{O}}
\nc\bP{\mathbb{P}}
\nc\bQ{\mathbb{Q}}
\nc\bR{\mathbb{R}}
\nc\bS{\mathbb{S}}
\nc\bT{\mathbb{T}}
\nc\bU{\mathbb{U}}
\nc\bV{\mathbb{V}}
\nc\bW{\mathbb{W}}
\nc\bY{\mathbb{Y}}
\nc\bX{\mathbb{X}}
\nc\bZ{\mathbb{Z}}
\nc\cA{\mathcal{A}}
\nc\cB{\mathcal{B}}
\nc\cC{\mathcal{C}}
\rnc\cD{\mathcal{D}}
\nc\cE{\mathcal{E}}
\nc\cF{\mathcal{F}}
\nc\cG{\mathcal{G}}
\rnc\cH{\mathcal{H}}
\nc\cI{\mathcal{I}}
\nc{\cJ}{\mathcal{J}}
\nc\cK{\mathcal{K}}
\rnc\cL{\mathcal{L}}
\nc\cM{\mathcal{M}}
\nc\cN{\mathcal{N}}
\nc\cO{\mathcal{O}}
\nc\cP{\mathcal{P}}
\nc\cQ{\mathcal{Q}}
\rnc\cR{\mathcal{R}}
\nc\cS{\mathcal{S}}
\nc\cT{\mathcal{T}}
\nc\cU{\mathcal{U}}
\nc\cV{\mathcal{V}}
\nc\cW{\mathcal{W}}
\nc\cY{\mathcal{Y}}
\nc\cX{\mathcal{X}}
\nc\cZ{\mathcal{Z}}
\nc{\dmo}{\DeclareMathOperator}
\dmo{\Tw}{Twist}
\dmo{\CP}{Pres}
\rnc{\Re}{\operatorname{Re}}
\rnc{\Im}{\operatorname{Im}}
\rnc{\span}{\operatorname{span}}
\dmo{\rank}{rank}
\dmo{\End}{End}
\dmo{\Ad}{Ad}
\dmo{\Id}{Id}
\dmo{\lcm}{lcm}
\dmo{\Area}{Area}
\nc{\Tm}{Teichm\"uller\xspace}
\nc{\odd}{\cH^{\text{odd}}(4)}
\nc{\hyp}{\cH^{\text{hyp}}(4)}
\nc{\prym}{\tilde{\mathcal{Q}}(3,-1^3)}
\nc{\G}{GL^+(2,\bR)}
\begin{document}

\title[Finiteness in non-arithmetic rank 1]{Finiteness of Teichm\"uller curves in non-arithmetic rank 1 orbit closures}
\author[Lanneau]{Erwan~Lanneau}
\address{\hspace{-0.5cm} Institut Fourier, Universit\'e de Grenoble 1 \newline
100 rue des Maths, BP 74 \newline
38402 Saint-Martin d'H\`eres \newline FRANCE}
\email{Erwan.Lanneau@ujf-grenoble.fr}
\author[Nguyen]{Duc-Manh~Nguyen}
\address{\hspace{-0.5cm} IMB Bordeaux-Universit\'e de Bordeaux \newline
351, Cours de la Lib\'eration \newline
33405 Talence Cedex \newline FRANCE}
\email{duc-manh.nguyen@math.u-bordeaux1.fr}
\author[Wright]{Alex~Wright}
\address{\hspace{-0.5cm} Department of Mathematics\newline
Stanford University\newline
Palo Alto, CA 94305}
\email{amwright@stanford.edu}
%


\begin{abstract}
We show that in any non-arithmetic rank 1 orbit closure of translation surfaces, there are only finitely many Teichm\"uller curves. We also show that in any non-arithmetic rank 1 orbit closure, any completely parabolic surface is  Veech.
\end{abstract}

\maketitle
\thispagestyle{empty}


\section{Introduction}\label{S:intro}

\subsection{Motivation and  results}
Closed $GL(2, \bR)$ orbits of translation surfaces first appeared in the work of Veech in connection to billiards in triangles \cite{V}. Now their study is a sizable industry combining flat geometry, dynamics, Teichm\"uller theory, algebraic geometry, and number theory. Examples arising from torus covers are abundant, and additionally a few infinite families and  sporadic examples have been discovered, but following deep work of \mbox{McMullen}, M\"oller, Bainbridge and others it is  believed that most types of closed $GL(2, \bR)$ orbits are rare: there should be at most finitely many in each genus.

Recent work of  Matheus-Wright demonstrates a new paradigm for proving such finiteness results \cite{MW}. The key step is to show that closed $GL(2, \bR)$ orbits of the given type are not dense in some larger orbit closure $\cM$. This $\cM$ might be either \emph{higher rank} or  \emph{rank 1}.
Here we rule out the rank 1 case.
\par
\begin{thm}\label{T:fin}
Each non-arithmetic rank 1 orbit closure contains at most finitely many closed $GL(2, \bR)$ orbits.
\end{thm}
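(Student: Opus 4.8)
The plan is to argue by contradiction, combining equidistribution with the rigid structure of rank $1$. Suppose $\cM$ is non-arithmetic of rank $1$ and contains infinitely many closed $\G$ orbits $\cC_1,\cC_2,\dots$. After normalizing area and passing to a subsequence, the Eskin--Mirzakhani--Mohammadi equidistribution theorem arranges that the $\cC_n$ equidistribute toward an affine invariant submanifold $\cN\subseteq\cM$. If $\cN\subsetneq\cM$, then infinitely many $\cC_n$ lie in a proper affine invariant submanifold; since such a submanifold is again rank $1$ (its absolute image still contains the $\G$-plane and is contained in the two-dimensional absolute image of $\cM$) and of strictly smaller dimension, one concludes by induction on $\dim\cM$, after checking that the non-arithmetic hypothesis is inherited and handling the arithmetic sub-case directly. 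So I may assume the $\cC_n$ equidistribute to $\cM$, and in particular that the surfaces carried by the $\cC_n$ are dense in $\cM$.

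Next I would exploit the defining feature of rank $1$: the image of $T\cM$ under the forgetful map $p$ to absolute cohomology is exactly the two-dimensional tautological $\G$-plane. Hence $T\cM=P\oplus R$, where $P$ is the tautological plane (mapping isomorphically onto the absolute image) and $R\subseteq\ker p$ is the rel subspace; $\cM$ is a single closed orbit precisely when $R=0$, so I may assume $R\neq 0$. The essential point is that the holonomy of every core curve of a cylinder is an \emph{absolute} period, hence confined to the tautological plane, whereas a rel deformation moves \emph{relative} periods---in particular the heights of cylinders in a periodic direction---while fixing all core-curve holonomies. Thus a nonzero rel deformation changes the modulus of some cylinder in a given periodic direction without changing its circumference, and so destroys commensurability of moduli. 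This incompatibility between rel and parabolicity is the engine of the whole argument.

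The crux is the companion statement of the abstract: in such an $\cM$, a completely parabolic surface is Veech. I would prove this by showing that complete parabolicity together with the rank $1$ structure forces $\overline{\G\cdot X}$ to have trivial rel, hence to be two-dimensional: a nontrivial rel deformation staying in $\overline{\G\cdot X}$ would, by the previous paragraph, make some periodic direction non-parabolic, contradicting complete parabolicity once one knows, via the cylinder deformation theorem, that the relevant periodic directions persist under the deformation. Granting this, finiteness follows. From the equidistributing family I extract $X_n\in\cC_n$ converging to some $X_\infty\in\cM$ and argue that the cylinder decompositions in the parabolic directions of the $X_n$ converge, so that $X_\infty$ inherits enough parabolic directions to be completely parabolic; by the companion theorem $X_\infty$ is then Veech, so $\cC_\infty:=\overline{\G\cdot X_\infty}$ is a closed orbit. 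A local rigidity statement---again powered by the rel-versus-parabolicity incompatibility, which shows that distinct closed orbits cannot be rel-deformations of one another and hence are isolated in $\cM$---forces $\cC_n=\cC_\infty$ for all large $n$, contradicting that the $\cC_n$ are infinitely many and distinct.

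I expect the main obstacle to be the control of limits of cylinder data. Complete parabolicity concerns \emph{all} periodic directions, and the set of periodic directions, together with the circumferences and moduli of the corresponding cylinders, can degenerate as $X_n\to X_\infty$ (cylinders may shrink or split, and moduli may escape to $0$ or $\infty$). Making precise that enough parabolic directions survive to the limit to guarantee complete parabolicity of $X_\infty$---and, in the companion theorem, that a rel deformation genuinely preserves a periodic direction so that its broken commensurability yields a contradiction---is where the rank $1$ hypothesis and the cylinder deformation theorem must be used most carefully. The non-arithmetic hypothesis (the field of definition $\bk$ is not $\bQ$) enters precisely to exclude the degenerate case in which rational circumference ratios would let parabolicity coexist with rel, that is, to rule out the arithmetic behaviour in which closed orbits genuinely are dense.
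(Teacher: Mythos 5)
Your reduction via Eskin--Mirzakhani--Mohammadi (pass to a sub-orbit-closure in which the closed orbits are dense, note that rank $1$ and non-arithmeticity are inherited) matches the paper's first step. But the ``engine'' you rely on afterwards --- that a nonzero rel deformation changes some cylinder's modulus without changing circumferences ``and so destroys commensurability of moduli'' --- has a genuine gap, both logical and substantive. Logically: a rel deformation of a completely parabolic surface $X$ produces a \emph{different} surface $X+v$, so incommensurability of moduli on $X+v$ cannot ``contradict complete parabolicity'' of $X$; the contradiction must instead be played off against \emph{density} of completely parabolic surfaces, by exhibiting a sufficiently fat set of small $v$ for which $X+v$ fails to be completely parabolic. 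Substantively: rel moves moduli ratios continuously and rational values are dense, so for a dense set of rel parameters your chosen ratio is still rational, and the deformed surface may well be completely parabolic. This is not a removable technicality: in \emph{arithmetic} rank $1$ loci, completely parabolic (square-tiled) surfaces are dense and there are infinitely many closed orbits, so any argument of the shape ``rel kills parabolicity'' must fail there, and hence must use non-arithmeticity through a concrete mechanism. Your proposal invokes non-arithmeticity only at the end, as an unexplained exclusion of ``the degenerate case,'' which is precisely the missing content. The same gap undermines your final ``local rigidity'' claim that distinct closed orbits cannot be rel deformations of one another.

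The paper's entire Section 3 exists to supply that mechanism. One takes $X$ completely parabolic, $\cM$-stably periodic, with periods in $\bk[i]$ where $\bk=\bk(\cM)\neq\bQ$; then any nearby completely parabolic surface $X+v$ forces $v$ to have coordinates in $\bk[i]$. For such $v$ outside a fixed real hyperplane $P$, one shows $X+v$ is not completely parabolic \emph{even if} the particular moduli ratio $f(v)$ happens to be rational: the condition $f(gv)\in\bQ$ means all Galois conjugates $\iota_1(f(gv)),\ldots,\iota_d(f(gv))$ agree, giving $d-1$ nontrivial polynomial conditions on $\rho(g)\in SL(2,\bR)^d$; Zariski density of (the image under $\rho$ of) the Veech group in $SL(2,\bR)^d$ together with the Eskin--Mozes--Oh escape theorem produces $g\in SL(X,\omega)$ with $\|g\|$ bounded \emph{independently of $v$} such that $f(gv)\notin\bQ$. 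The uniform bound is what lets one stay in the chart where $f$ really is a ratio of moduli, so $g(X+v)=X+gv$ has incommensurable parallel cylinders and $X+v$ is not completely parabolic. Density of completely parabolic surfaces then fails because they would all have to lie over the hyperplane $P$. Without this (or some substitute exploiting $\bk\neq\bQ$ quantitatively), the central step of your proposal --- both the companion theorem and the isolation of closed orbits --- remains unproved.
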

The known rank 1 orbit closures for which Theorem \ref{T:fin} is new are the Prym eigenform loci in genus 4 and 5 and the Prym eigenform loci in genus 3 in the principal stratum.

A point on a closed $GL(2, \bR)$ orbit is called a Veech surface. Many strange and fascinating surfaces are known that share some properties with Veech surfaces but are not actually Veech. Most of these lie in rank 1 orbit closures. An open question of Smillie and Weiss asks whether a particular property of Veech surfaces--complete parabolicity--in fact implies that the surface is Veech \cite{SWprobs}. We give a positive answer in rank 1 and in genus 2.

\begin{thm}\label{T:CPveech}
In a rank 1 orbit closure, every completely parabolic surface is Veech.
\end{thm}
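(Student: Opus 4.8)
The plan is to prove the contrapositive in the form of a dichotomy: a completely parabolic surface in a rank $1$ orbit closure has closed $\G$-orbit. First I would reduce to the case of a dense orbit. Let $(X,\omega)$ be completely parabolic and let $\cN=\overline{\G\cdot(X,\omega)}\subseteq\cM$ be its orbit closure. Since passing to a suborbit closure cannot increase the rank and the rank is always at least $1$, $\cN$ is again a rank $1$ orbit closure, and $(X,\omega)$ is Veech exactly when $\cN$ is a \Tm curve. A dimension count shows that, in rank $1$, being a \Tm curve is equivalent to $\cN$ carrying no nontrivial relative (REL) deformations (the projection $p$ to absolute cohomology is injective on $T\cN$). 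Thus it suffices to show: if $(X,\omega)$ has dense $\G$-orbit in a rank $1$ orbit closure $\cN$ and is completely parabolic, then $\cN$ has trivial REL. I will assume for contradiction that the REL of $\cN$ is positive-dimensional.

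Next I would use the cylinder structure. By the Cylinder Deformation Theorem the cylinders of any decomposition, grouped into $\cN$-equivalence classes, may be sheared and stretched one class at a time while staying in $\cN$. In rank $1$ the absolute period structure is only two-dimensional, so in a completely periodic direction a single equivalence class contributes, modulo the $\G$-action, no new deformation; hence a positive-dimensional REL forces some completely periodic direction to contain at least two distinct equivalence classes. Concretely, if a horizontally periodic surface has its horizontal cylinders split into classes $A$ and $B$, then combining the class-$A$ cylinder deformation with the $\G$-action yields a genuine REL deformation that changes the ratio of the $A$-moduli to the $B$-moduli. Since completely periodic directions are dense, possessing such a two-class direction is an open, nonempty condition on $\cN$; as the $\G$-orbit of $(X,\omega)$ is dense and complete parabolicity is $\G$-invariant, I may replace $(X,\omega)$ by a point of its orbit and rotate so that $(X,\omega)$ itself is horizontally periodic with its horizontal cylinders meeting at least the two classes $A$ and $B$.

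The contradiction should then come from the interaction of the parabolic with this two-class REL freedom. Complete parabolicity makes the horizontal direction parabolic: there is $\psi\in\mathrm{Aff}(X,\omega)$ with derivative a nontrivial unipotent, which forces all horizontal moduli to be pairwise commensurable and, after passing to a power, fixes each class $A$, $B$ setwise. On the other hand the class-$A$ REL deformation stays in $\cN$, keeps the surface horizontally periodic, and varies the $A:B$ modulus ratio continuously, so for a dense set of parameters the deformed surface is horizontally periodic but \emph{not} parabolic. The decisive step is to transfer this incommensurability back to the periodic directions of $(X,\omega)$ itself: I would analyze the action of $\psi$ — equivalently, the first return map of the periodic horocycle through $(X,\omega)$ — on the REL coordinates of $\cN$, and show that the commensurability forced in \emph{every} periodic direction of $(X,\omega)$ makes this action trivial on REL, so that the $A:B$ height ratio cannot vary, contradicting the nontrivial variation produced by the cylinder deformation. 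This would force REL to be trivial and $(X,\omega)$ to be Veech.

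The main obstacle is exactly this last transfer. Complete parabolicity constrains only the single $\G$-orbit of $(X,\omega)$, whereas the deformations that create incommensurable moduli leave that orbit; and because commensurability of moduli is neither an open nor a closed condition, density of the orbit does not by itself move incommensurability onto $(X,\omega)$. I therefore expect the real content to be a rigidity statement about parabolic automorphisms in rank $1$ — that having parabolics in a dense set of periodic directions is incompatible with a nontrivial REL action — which is precisely where the finer structure of rank $1$ orbit closures (and the machinery underlying Theorem \ref{T:fin}) must enter.
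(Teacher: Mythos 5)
Your reduction and your first two steps reproduce the paper's setup faithfully: by Theorem~\ref{T:EMM} the orbit closure $\cM$ of $(X,\omega)$ is an affine invariant submanifold, it is rank $1$ since it sits in a rank $1$ orbit closure, and in rank $1$ being a closed orbit is equivalent to $\ker(p)\cap T(\cM)=0$ (Proposition~\ref{P:SL2plusrel}); your observation that an imaginary rel deformation of a stably periodic surface must raise one cylinder and lower another, hence move a ratio of moduli, is exactly Lemma~\ref{lem:exist:CP:stab}. But the third step, which you yourself flag as the crux, is precisely the content of Theorem~\ref{T:mother}, and it is left unproved. Worse, the mechanism you propose for it cannot work as stated. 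By Lemma~\ref{L:gonrel} the Veech group acts on $\ker(p)\cong\bC^{s-1}$ by the standard linear action of $2\times 2$ matrices on each coordinate $\bC\simeq\bR^2$; a horizontal parabolic acts by $(x,y)\mapsto(x+sy,y)$, so it \emph{automatically} fixes all imaginary rel coordinates, and since the modulus ratio $f(v)$ depends only on $\mathrm{Im}(v)$ one has $f(\psi v)=f(v)$ identically. Thus ``triviality of the parabolic action on rel'' holds for free, is perfectly consistent with nontrivial rel, and contradicts nothing: it is a statement about the group action, whereas the variation of the $A{:}B$ ratio is a statement about the function $f$ on $\ker(p)$, and the two do not interact the way your sketch requires.

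What the paper actually does at this point is arithmetic rather than dynamical. First, complete parabolicity in rank $1$ forces (after a small $GL(2,\bR)$ move) all periods of $(X,\omega)$ into $\bk[i]$ with $\bk=\bk(\cM)$ (Lemma~\ref{L:relperiods}), so the only rel parameters $v$ for which $(X,\omega)+v$ could be completely parabolic lie in the countable set $H^1(X,\Sigma,\bk[i])$. Second, for each such $v$ off a fixed real hyperplane $P$, Lemma~\ref{L:findg} produces $g\in SL(X,\omega)$ with $f(gv)\notin\bQ$ and with $\|g\|\leq R$ \emph{uniformly} in $v$; the uniform bound is what keeps $gv$ inside the chart where $f$ really is the ratio of the moduli of $C_1,C_2$, so that $g((X,\omega)+v)=(X,\omega)+gv$ exhibits two parallel cylinders with irrational ratio of moduli and hence $(X,\omega)+v$ is not completely parabolic (Corollary~\ref{cor:v:small:no:cp}). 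Producing such a bounded $g$ is where the real machinery sits: Zariski density of the Galois embedding $\rho(SL(X,\omega))\subset SL(2,\bR)^d$ (Proposition~\ref{P:Zdense}) combined with the Eskin--Mozes--Oh escape theorem (Theorem~\ref{T:EMO}) applied to the subvarieties of Lemma~\ref{L:polys}, and the non-vanishing of those defining polynomials uses $\bk\neq\bQ$ essentially. This exposes a second structural problem with your plan: it never invokes non-arithmeticity, yet the density statement you are implicitly aiming at (Theorem~\ref{T:mother}) is \emph{false} when $\bk=\bQ$, since completely parabolic (square-tiled) surfaces are dense in arithmetic rank $1$ orbit closures. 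Any completion of your argument must therefore either use $\bk\neq\bQ$ at the decisive moment, as the paper does, or handle the arithmetic case by entirely different means.
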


\begin{cor}
In genus 2 and in the Prym loci,  every completely parabolic surface is Veech.
\end{cor}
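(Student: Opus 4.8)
The plan is to deduce the corollary from Theorem~\ref{T:CPveech} by showing that, in each of these settings, a completely parabolic surface already lies in a rank~$1$ orbit closure. The first observation I would record is that complete parabolicity forces complete periodicity: if a direction carries a cylinder then, being parabolic, it is stabilized by a parabolic element of the Veech group, and this element can only exist when the surface decomposes entirely into cylinders with commensurable moduli in that direction. Hence every cylinder direction of a completely parabolic surface is periodic, which is exactly Calta's notion of a completely periodic surface. So the corollary reduces to the claim that completely periodic surfaces in genus~$2$ and in the Prym loci sit inside rank~$1$ orbit closures.

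Granting this reduction, I would finish by invoking the relevant structure theorems. In genus~$2$, Calta's theorem identifies every completely periodic surface as an eigenform for real multiplication, and McMullen's classification shows that the eigenform loci $\Omega E_D$ (together with any closed orbits they contain) have rank~$1$; since the orbit closure of a surface is contained in the ambient invariant subvariety, its rank is at most $1$. For the Prym loci one appeals to the corresponding classification of completely periodic surfaces as Prym eigenforms, whose loci are again rank~$1$. In either case the orbit closure of the given surface has rank~$1$, so Theorem~\ref{T:CPveech} applies directly and the surface is Veech. Note that in $\cH(2)$ the eigenform loci are themselves closed orbits, so there the conclusion is immediate, whereas in $\cH(1,1)$ the loci $\Omega E_D$ are positive-dimensional and Theorem~\ref{T:CPveech} is genuinely needed to upgrade ``completely parabolic'' to ``Veech.''

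The step I expect to be the main obstacle is precisely ``completely periodic $\Rightarrow$ rank~$1$,'' namely ruling out that a completely periodic, non-Veech surface could have a full rank~$2$ orbit closure (an entire stratum in genus~$2$, or an entire Prym locus). A direct attack would try to exhibit, within such a rank~$2$ locus, a completely periodic direction whose $M$-parallel classes of cylinders support incommensurable moduli, contradicting parabolicity. The difficulty is that this has to hold for the \emph{given} surface and not merely for a generic one, and the shears supplied by the cylinder deformation theorem need not themselves fix the surface; so one cannot simply perturb. For this reason I would bypass the direct argument and quote the existing structural results---Calta's theorem in genus~$2$ and its Prym analogue---which classify completely periodic surfaces outright and thereby pin down the rank, leaving only the clean application of Theorem~\ref{T:CPveech}.
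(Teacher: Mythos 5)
Your overall strategy is the paper's: place the surface in a rank 1 orbit closure and then apply Theorem \ref{T:CPveech}. The genus 2 half of your argument is correct as written, since Calta's theorem really does classify completely \emph{periodic} genus 2 surfaces as eigenforms, and the eigenform loci are rank 1.

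The gap is in the Prym case, and it is created by your very first reduction. By weakening ``completely parabolic'' to ``completely periodic'' you throw away the commensurability of moduli, and you then need a ``Prym analogue of Calta's theorem'' classifying completely periodic surfaces in the Prym loci as Prym eigenforms. No such theorem exists in the literature: Calta's argument is special to genus 2, and in the Prym loci the only known criterion of this type is McMullen's, which requires a \emph{hyperbolic} element: a Prym form whose Veech group contains a hyperbolic element is a Prym eigenform \cite{Mc2} (generalizing the genus 2 statement of \cite{Mc}). This is exactly where complete parabolicity, rather than mere complete periodicity, must enter. A completely parabolic surface of genus at least 2 has at least two cylinder directions; each completely parabolic direction supports an affine multi-twist by the Thurston--Veech construction, i.e.\ a parabolic element of $SL(X,\omega)$ fixing that direction; and two non-commuting parabolics generate a group containing hyperbolic elements. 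Feeding such a hyperbolic element into McMullen's criterion places the surface in a Prym eigenform locus, which is rank 1, after which Theorem \ref{T:CPveech} applies --- this is what the paper's citation \cite{Mc, Mc2} is invoking. Note that complete periodicity alone yields no parabolic elements whatsoever (the completely periodic, non-Veech surfaces with dense orbit in $\cH(4)^{\mathrm{hyp}}$ mentioned in Section \ref{sec:context} illustrate this), so your reduction is a genuine loss of information, not a rephrasing; the fix is simply to skip it and argue from parabolicity directly.
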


The Prym loci, which exist in genus 3, 4, and 5, are defined in \cite{Mc2}. The corollary follows from Theorem \ref{T:CPveech} because in genus 2 and in the Prym loci every completely parabolic surface lies in a rank 1 orbit closure \cite{Mc, Mc2}. 

Theorem \ref{T:CPveech} is new even for the much studied eigenform loci in genus 2, and sheds light on the extent to which orbit closures can be computed using cylinder deformations \cite{Wcyl}.

\subsection{Terminology}

Veech surfaces satisfy a remarkable property: in any direction, the straight line flow in that direction is either uniquely ergodic or there is a collection of parallel cylinders in that direction that cover the surface and whose moduli  are all rational multiples of each other \cite{V}. (The modulus of a cylinder is defined to be the height divided by the circumference). This motivates the following definition.
%
\par
\begin{defn}
A translation surface is called \emph{completely parabolic} if for any cylinder direction, there is a collection of parallel  cylinders in that direction that cover the surface and whose moduli  are all rational multiples of each other.
\end{defn}
\par
An orbit closure of  translation surfaces is a closed orbit if and only if it contains only completely parabolic surfaces \cite{V}. Relaxing this restriction, one arrives at one definition of a rank 1 orbit closure.

\reversemarginpar
\begin{defn}
A translation surface is called \emph{completely periodic} if for every cylinder direction there is a collection of parallel cylinders in that direction that cover the surface. An orbit closure of translation surfaces is called \emph{rank 1} if it consists entirely of completely periodic translation surfaces. Closed $GL(2,\bR)$ orbits, or more generally rank 1 orbit closures, are called \emph{arithmetic} if they consist entirely of branched torus covers \cite{Wfield}.
\end{defn}

\par
See Section~\ref{sec:rank} and Theorem~\ref{thm:W} for equivalent definitions.  In general the torus covers can be branched over several points, however for closed $GL(2,\bR)$ orbits they can be assumed to be branched only over one point.
\par

\subsection{Outline of proof} Both Theorems \ref{T:fin} and \ref{T:CPveech} will be deduced from the following result.
\par
\begin{thm}\label{T:mother}
The set of completely parabolic surfaces is not dense in any non-arithmetic rank 1 affine invariant
submanifold that is not a closed orbit.
\end{thm}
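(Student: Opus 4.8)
The plan is to localize and then to produce, in period coordinates, a single $\bk$-algebraic relation between cylinder moduli whose only rational solutions lie on a proper subvariety. Write $\bk$ for the field of definition of the given submanifold $\cM$; by Theorem~\ref{thm:W} and \cite{Wfield} the non-arithmetic hypothesis means $\bk\neq\bQ$, and since $\cM$ is not a closed orbit its rel dimension is positive, so $\cM$ carries nonzero relative (rel) deformations. It suffices to show that completely parabolic surfaces are nowhere dense near one well-chosen surface: I would pass to $(X,\omega)\in\cM$ that is periodic in two transverse directions, say horizontal and vertical, which is possible on a dense set because $\cM$ is completely periodic, and work in a neighborhood $U$ on which both cylinder diagrams are constant.

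The key rigidity is that rank $1$ forces the projection of $T\cM$ to $H^1(X)$ to be one complex-dimensional, so the horizontal absolute periods remain proportional along $\cM$; hence the ratios $c_i/c_j$ of horizontal circumferences are locally constant and lie in $\bk$, and likewise in the vertical direction. Heights, by contrast, are relative periods. A purely rel deformation fixes every circumference and moves the heights affinely, so each horizontal modulus $m_i=h_i/c_i$ is an affine function along a rel line, as is each vertical modulus. Comparing a cylinder whose height moves to one whose height is fixed, parabolicity of the horizontal direction---pairwise commensurability of the $m_i$---becomes the condition that one affine function take a value in $\bQ$, confining the rel parameter to a translate $t_0^h+\lambda_h\bQ$; the vertical direction imposes a second such condition $t_0^v+\lambda_v\bQ$.

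The heart of the argument is that these two conditions are incompatible off a thin set precisely when $\cM$ is non-arithmetic. If $\lambda_h/\lambda_v\notin\bQ$ then, since $\lambda_hq-\lambda_vq'=t_0^v-t_0^h$ has at most one rational solution $(q,q')$, the two translates meet in at most one point, so the relation between the horizontal and vertical modulus ratios is a line of irrational slope whose rational points form a proper subvariety. I would try to arrange the comparison cylinders so that this relation is homogeneous over $\bk$---of the shape $\psi_{\mathrm v}=\theta\,\psi_{\mathrm h}$ with $\theta\in\bk\setminus\bQ$---so that $\psi_{\mathrm h},\psi_{\mathrm v}\in\bQ$ forces $\psi_{\mathrm h}=0$; then all nearby completely parabolic surfaces lie in the closed, proper, real-analytic locus $\{\psi_{\mathrm h}=0\}$, which is nowhere dense, and non-density follows. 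By contrast, when $\bk=\bQ$ every $c_i/c_j$ is rational, both slopes are rational, $\lambda_h/\lambda_v\in\bQ$, and the two conditions coincide on a dense set---consistent with completely parabolic surfaces being dense among branched torus covers---so non-arithmeticity is used in an essential way.

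I expect the main obstacle to be proving the slope incommensurability $\lambda_h/\lambda_v\notin\bQ$ and, more precisely, realizing the combined condition as a genuinely $\bk$-irrational, homogeneous relation, so that its rational solutions are confined to a proper closed subvariety rather than merely a dense countable set. This should come from tracking how the field of definition acts on the absolute homology of the two transverse decompositions \cite{Wfield,Wcyl} and showing that a rational value of the slope ratio would impose an extra rational period relation, degenerating $\cM$ to a closed orbit or to the arithmetic case and contradicting the hypotheses. Controlling the dependence of $t_0^h,t_0^v,\lambda_h,\lambda_v$ on the remaining coordinates of $U$, and thereby upgrading the pointwise count along each rel line to a closed nowhere-dense containment, is the remaining technical point.
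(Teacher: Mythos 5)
Your reduction breaks precisely where the horizontal and vertical parabolicity conditions are supposed to interact. On a rank~1 submanifold that is not a closed orbit, the rel space $\ker(p)\cap T_{(X,\omega)}(\cM)$ is the complexification of a real subspace, hence has at least two real dimensions. Horizontal cylinder moduli depend only on $\Im(v)$ (rel deformations fix circumferences and change heights, which are imaginary parts of relative periods), while vertical moduli depend only on $\Re(v)$. So your two conditions constrain \emph{complementary} coordinates: the locus they cut out is a product of a dense subset of the $\Re$-coordinates with a dense subset of the $\Im$-coordinates, which is still dense --- no contradiction with density of completely parabolic surfaces. Your ``at most one intersection point'' count is valid only after restricting to a single real line $v=tw$, and knowing that a set meets each such line in at most one point does not contradict its density in $\cM$ (a countable dense set can meet every line in finitely many points). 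The fix you propose --- arranging a homogeneous relation $\psi_{\mathrm v}=\theta\,\psi_{\mathrm h}$ with $\theta\in\bk\setminus\bQ$ --- is structurally impossible: $\psi_{\mathrm h}$ is a function of $\Im(v)$ alone and $\psi_{\mathrm v}$ of $\Re(v)$ alone, so they can be proportional as functions only if both are constant.

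The missing ideas are exactly the ones the paper uses to couple conditions coming from different directions. First, a completely parabolic surface has a hyperbolic element in its Veech group, so by Kenyon--Smillie its relative periods span the same $\bQ$-vector space as its absolute periods; since rel deformations preserve absolute periods, any completely parabolic $(X,\omega)+v$ near the base surface forces $v\in H^1(X,\Sigma,\bk[i])$ --- a countable set of candidates. Second, complete parabolicity is $GL(2,\bR)$-invariant, so each candidate $v$ can be tested against the modulus ratio $f(gv)$ for elements $g$ of the Veech group of the base surface: writing $f(gv)\in\bQ$ as agreement of all Galois conjugates $\iota_i(f(gv))$ produces, for each $v$ outside a fixed real hyperplane $P$, a proper subvariety $\cX(v)\subset SL(2,\bR)^d$ cut out by boundedly many polynomials of bounded degree. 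Zariski density of the Galois-embedded Veech group (Proposition~\ref{P:Zdense}) together with the Eskin--Mozes--Oh escape theorem (Theorem~\ref{T:EMO}) then gives a $g$ with $f(gv)\notin\bQ$ whose norm is bounded \emph{independently of $v$}; this uniformity is what allows one to stay in a fixed chart and conclude that every small candidate $v\notin P$ yields a non-completely-parabolic surface, confining completely parabolic surfaces near $(X,\omega)$ to the nowhere dense hyperplane $P$. Without a substitute for this arithmetic rigidity and for the Veech-group/Galois mechanism, the two-direction argument you outline cannot be completed.
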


Theorem \ref{T:mother} will be established in Section~\ref{S:proof} via a mostly elementary argument, showing that in non-arithmetic rank 1 affine invariant
submanifold most rel deformations of a surface with sufficiently large Veech group are not completely parabolic. 
The key tools will be the Zariski density of the Veech group in $\mathrm{SL}(2,\bR)^d$ and a result of Eskin-Mozes-Oh on escape from sub-varieties
in Zariski dense groups.

To derive Theorems \ref{T:fin} and \ref{T:CPveech} from Theorem \ref{T:mother}, we will use the recent work of Eskin-Mirzakhani-Mohammadi \cite{EM, EMM}.

\begin{thm}[Eskin-Mirzakhani-Mohammadi]\label{T:EMM}
Any closed $\mathrm{GL}(2, \bR)$ invariant subset of a stratum is a finite union of affine invariant submanifolds.
\end{thm}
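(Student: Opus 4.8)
The plan is to deduce this orbit-closure classification from the measure classification theorem of Eskin--Mirzakhani, which I would establish first: every ergodic $P$-invariant probability measure $\nu$ on the stratum---where $P \subset \mathrm{SL}(2,\bR)$ is the upper-triangular subgroup---is in fact $\mathrm{SL}(2,\bR)$-invariant and \emph{affine}, meaning its support is an affine invariant submanifold and $\nu$ is flat (Lebesgue) in period coordinates. The heart of this is to promote partial invariance to full invariance. First I would fix period coordinates, the Teichm\"uller geodesic flow $g_t = \mathrm{diag}(e^t, e^{-t})$, and the Kontsevich--Zorich cocycle governing how the period coordinates of nearby surfaces diverge. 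The engine is an exponential-drift argument in the spirit of Benoist--Quint and Eskin--Margulis--Mozes: assuming the conditional measures of $\nu$ along the unstable foliation fail to be invariant under some additional unipotent direction, one exploits recurrence of $g_t$-orbits to the thick part together with the controlled divergence of surfaces differing by a relative period to manufacture an extra invariance direction, and then iterates until the support is forced to be linear in period coordinates.

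The main obstacle is precisely this drift step. Unlike the homogeneous setting of Ratner theory, the relevant foliations are not orbits of an algebraic group, and the cocycle is only measurably---not algebraically---tame, so one must synchronize times so that the drifted points lie on a common unstable leaf and then argue that the transported conditional measures genuinely produce new invariance rather than collapsing. Carrying this out requires factoring the cocycle into its Lyapunov subspaces and proving delicate entropy estimates, relating the entropy of $g_t$ to the positive Lyapunov exponents weighted by the dimensions of the conditional supports; this is where essentially all the analytic difficulty of the theorem resides.

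Granting the measure classification, I would pass to orbit closures by equidistribution. Given a surface $x$, form the $P$-averages (equivalently, circle-averaged $g_t$-pushforwards) of $\delta_x$ over larger and larger sets; a quantitative non-divergence estimate, guaranteeing no escape of mass into the cusp, ensures every weak-$*$ limit is a probability measure, hence affine by the classification, and a further argument identifies a unique limit $\nu_x$ whose support $\cN_x$ is an affine invariant submanifold equal to $\overline{\mathrm{GL}(2,\bR)x}$. To upgrade this to the finite-union statement for an arbitrary closed invariant $Z$, I would use that affine invariant submanifolds are cut out by finitely many linear equations with real-algebraic coefficients of bounded complexity, so they form a countable family enjoying an isolation property under which no submanifold is an uncontrolled limit of others of comparable dimension. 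A dimension induction on $Z$---peeling off the maximal-dimensional affine submanifolds it contains---then terminates after finitely many steps and exhibits $Z$ as a finite union of affine invariant submanifolds.
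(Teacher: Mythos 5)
This statement is not proved in the paper at all: it is imported as a black box from the cited work of Eskin--Mirzakhani and Eskin--Mirzakhani--Mohammadi (the references \cite{EM, EMM}), and the paper's only engagement with it is to apply it. So there is no ``paper's own proof'' to compare against; the relevant comparison is with the cited literature. Measured against that, your outline is a faithful summary of the actual strategy: the measure classification of $P$-invariant measures via exponential drift and entropy inequalities in the Benoist--Quint style is the content of Eskin--Mirzakhani, and the passage to orbit closures via circle/$g_t$-averages, quantitative non-divergence, and an isolation-plus-induction argument for the finite-union statement is the content of Eskin--Mirzakhani--Mohammadi.

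That said, you should be clear that what you have written is a roadmap, not a proof. Every step where the difficulty actually lives is named and then deferred: the drift argument synchronizing unstable leaves under a merely measurable cocycle, the entropy estimates tied to Lyapunov exponents, the recurrence/non-divergence bounds, the uniqueness of the limit measure $\nu_x$ (which in the cited work requires a Birkhoff-type genericity theorem along $P$-averages, not just weak-$*$ compactness), and the isolation property underlying the finiteness induction (which is itself a theorem requiring the avoidance estimates of Eskin--Mirzakhani--Mohammadi, not a soft consequence of countability). In particular, the claim that affine invariant submanifolds ``form a countable family enjoying an isolation property'' is doing enormous unacknowledged work: countability and isolation are separate substantial results, and the dimension induction terminates only because of them. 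As a blind attempt at a statement whose known proof spans hundreds of pages, correctly reproducing the architecture is reasonable, but none of the analytic core is established by what you wrote, and within the present paper the correct move is exactly what the authors do: cite the theorem rather than reprove it.
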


\begin{proof}[Proof of Theorems~\ref{T:fin} and~\ref{T:CPveech} from Theorem~\ref{T:mother}]
We first prove Theorem~\ref{T:fin}. Let us assume that there are infinitely many closed orbits $\cC_i$ in a non-arithmetic rank 1 orbit closure $\cN$. By Theorem \ref{T:EMM}, the closure of the union of the $\cC_i$ is equal to a finite union of affine invariant submanifolds $\cM_1\cup \cdots\cup \cM_k$. One of these orbit closures, say $\cM_1$, must contain infinitely many of the $\cC_i$. Since $\cN$ is rank 1 and $\cM_1\subset \cN$, it follows that $\cM_1$ is also rank 1. 
 Since
$\cN$ is non-arithmetic,  $\cM_1$ is also non-arithmetic (Corollary~\ref{C:same}).
Because closed orbits consist entirely of completely parabolic translation surfaces, this contradicts Theorem \ref{T:mother}.

We now prove Theorem~\ref{T:CPveech}. Suppose $\cN$ is a non-arithmetic rank 1 orbit closure, and suppose $(X, \omega)\in \cN$ is a  completely parabolic surface that is not Veech.
Let $\cM$ be the orbit closure of $(X, \omega)$. Again we see that $\cM$ must be non-arithmetic and rank 1. Because $(X, \omega)$ is not Veech, $\cM$ is not a closed orbit. Furthermore $\cM$ contains a dense set of completely parabolic surfaces given by the orbit of $(X, \omega)$. This contradicts Theorem \ref{T:mother}.
\end{proof}

\subsection{Context}\label{sec:context} A Teichm\"uller curve is an isometrically immersed curve in the moduli space of Riemann surfaces. The projection of a closed $GL(2, \bR)$ orbit to the moduli space of Riemann surfaces is a Teichm\"uller curve, so as is common we will also refer to closed $GL(2, \bR)$ orbits as Teichm\"uller curves. 

\bold{Marked points on Veech surfaces.} The easiest examples of rank 1 orbit closures that are not Teichm\"uller curves are given by orbit closures of Veech surfaces with finitely many marked points. One can also take covers of these surfaces branched over the marked points and obtain very closely related rank 1 orbit closures of translation surfaces with no marked points.

A Veech surface with a marked point has closed $GL(2, \bR)$ orbit if and only if the marked point is periodic, i.e., has finite orbit under the action of the affine group. Gutkin-Hubert-Schmidt \cite{GHS} and M\"oller \cite{M2} showed that on any non-arithmetic Veech surface there are at most finitely many periodic points. This can be viewed as a special case of Theorem \ref{T:fin}, and was the inspiration for the present work. More specifically, the paper of Gutkin-Hubert-Schmidt  motivated this paper by giving us hope that in rank 1 a general finiteness result could be obtained without using much algebraic geometry.


\bold{Rank 1 orbit closures.} There are additionally infinitely many rank 1 orbit closures in genus 2 through 5, called the (Prym) eigenform loci. These examples were discovered by McMullen; in the case of genus 2 they were independently discovered in a different guise by Calta \cite{Ca, Mc, Mc2}. All currently known rank 1 orbit closures arise from (Prym) eigenform loci or Teichm\"uller curves via branched covering constructions.


Arithmetic rank 1 orbit closures never contain non-arithmetic Teichm\"uller curves, and they always contain a dense set of arithmetic Teichm\"uller curves. Non-arithmetic orbit closures never contain arithmetic Teichm\"uller curves \cite{Wfield}.

\bold{Unexpected surfaces.} The (Prym) eigenform loci and branched covers of Veech surfaces have been a rich source of examples of  surfaces with peculiar properties, including surfaces with infinitely generated Veech groups and satisfying the (topological) Veech dichotomy \cite{LN, Mc6, SW3, HS2}.

There are some examples of unusual surfaces not lying in a rank 1 orbit closure, for example there are many  completely periodic but not Veech surfaces in $\cH(4)^{\rm{hyp}}$ \cite{LN}, whose orbits must be dense by \cite{NW}.

\bold{Teichm\"uller curves.} A Teichm\"uller curve is called primitive if it does not arise from a covering construction \cite{M2}. 
Primitive Teichm\"uller curves were classified in genus 2 by McMullen \cite{McM:spin, Mc4}. 
As part of this classification,
McMullen showed that there is only one primitive Teichm\"uller curve in $\cH(1,1)$ \cite{Mc4}. Before proving this, McMullen established the weaker result that in each non-arithmetic eigenform locus in genus 2 there are at most finitely many Teichm\"uller curves \cite{Mc7}. Theorem \ref{T:fin} is a generalization of this result.

Furthermore, McMullen was able to show that every non-Veech surface in these loci must have a saddle connection not parallel to any cylinders \cite[Theorem 7.5]{Mc7}. This is very much analogous to Theorem \ref{T:CPveech}, because both imply the existence of a flat geometry ``certificate" that shows that a surface is not Veech. Our techniques also have similarities with those of McMullen. For example, both use that an infinite set of Teichm\"uller curves must be dense. (McMullen's work was well before Theorem \ref{T:EMM} was established, but he had previously established this result in genus 2 \cite{Mc5}.)

A Teichm\"uller curve is called algebraically primitive if the trace field of its Veech group has degree equal to the genus. Finiteness of algebraically primitive Teichm\"uller curves was shown in $\cH(g-1,g-1)^{hyp}$ by M\"oller \cite{M3}, in $\cH(3,1)$ by Bainbridge-M\"oller \cite{BaM}, in general in genus 3 by Bainbridge-Habegger-M\"oller \cite{BHM}, and in the minimal stratum in prime genus at least 3  by Matheus-Wright \cite{MW}. In the minimal stratum in genus 3, there are only finitely many primitive Teichm\"uller curves not contained in the Prym locus \cite{MW, NW, ANW}.

There is a partial classification of primitive Teichm\"uller curves in the Prym eigenform loci in genus three~\cite{ML}. Before the work of Bainbridge-Habegger-M\"oller, Lanneau-Nguyen had previously established Theorem~\ref{T:fin} for many Prym eigenform loci in genus three  in genus 3~\cite{LN}, using the strategy that McMullen used in eigenform loci in genus 2 \cite{Mc7}.

Teichm\"uller curves have extra-ordinary algebro-geometric properties \cite{M, M2, Mc3}, which have been key tools in much previous work on  finiteness of Teichm\"uller curves. Our method will not use any of these algebro-geometric results.

\bold{Acknowledgments.} The authors thank Emmanuel Breuillard, Yves Benoist and Slavyana Geninska for helpful conversations. This collaboration began at the program ``Flat Surfaces and Dynamics on Moduli Space" at Oberwolfach in March 2014, and continued at the program ``Introductory Workshop: Geometric and Arithmetic Aspects of Homogeneous Dynamics" at MSRI in February 2015. All three authors attended both programs and are grateful to the organizers, the MFO, and MSRI. The third author is partially supported by a Clay Research Fellowship. The first author is partially supported by the ANR Project GeoDyM. The authors thank Curtis McMullen and Ronen Mukamel for helpful comments on an earlier draft.

\section{Background}\label{S:background}

We review useful results concerning affine invariant submanifolds, rank and (affine) field of
definition. For a general introduction to translation surfaces and their moduli spaces,
we refer the reader to the surveys~\cite{MT,Wsurvey,Z}, or any of the many other surveys listed in \cite{Wsurvey}.
\par

Let $\cT_g$ be the Teichm\"uller space of genus $g$ Riemann surfaces. Over the complex manifold $\cT_g$ there is a natural bundle $\cT_{g,1}\to \cT_g$ whose fiber over the point $X\in \cT_g$ is the Riemann surface $X$. This bundle admits a natural action of the mapping class group $\Gamma$ covering the usual action of $\Gamma$ on $\cT_g$. The action of $\Gamma$ on $\cT_g$  has fixed points, so not all fibers of the map $\cT_{g,1}/\Gamma \to \cT_g/\Gamma$ are surfaces of genus $g$. 

One solution to this issue is to use stacks. Here we will instead use a finite index subgroup $\Gamma'\subset \Gamma$ whose action on $\cT_g$ does not have fixed points.
One standard choice of $\Gamma'$ is the level 3 congruence subgroup, i.e. the kernel of the action of the mapping class group on the first homology of a surface with $\bZ/3$ coefficients, see for example \cite[Theorem 6.9]{FarMar}.  In this case $\cT_g/\Gamma'$ parameterizes genus $g$ Riemann surfaces with a choice of level 3 structure, i.e. a choice of basis for first homology with $\bZ/3$ coefficients, and $\cT_{g,1}/\Gamma' \to \cT_g/\Gamma'$ is a fiber bundle whose fibers are genus $g$ Riemann surfaces.

Throughout this paper,  $\cH_g$ will denote the moduli space of Abelian differentials $\omega$ on Riemann surfaces $X$ of genus $g\geq 1$ equipped with a choice of level three structure. For any non-negative integer partition $(k_1, k_2, \ldots, k_s)$ of $2g-2$ we will denote by $\cH(k_1, \dots, k_s)\subset \cH_g$ the stratum of translation surfaces having $s$ labelled zeros, of orders $k_1, \ldots, k_s$. We will allow translation surface to have finitely many marked points, and by convention we will consider a marked point  to be a zero of order 0.
The marked points are required to be distinct from each other and the set of true zeros.
$\mathrm{GL}(2,\mathbb{R})$ acts on the set of translation surfaces (by post-composition with the charts of translation atlas). Moreover, this $GL(2,\mathbb{R})$-action on Abelian differential preserves each stratum. We will denote the stabilizer of $(X,\omega)$ by $\mathrm{SL}(X,\omega)$, and we will refer to it as the {\em Veech group}.
\par
The level three structure may be safely ignored, and we will drop it from our notation. (The use of level structures does affect  $SL(X,\omega)$ up to finite index, as does the labeling of the zeros. However this is not relevant in the present paper.) We will write $(X,\omega)\in \cH$, and it will be implicit that $X$ carries a level three structure and the requisite number of labelled marked points.

\subsection{Affine invariant submanifolds}

Let $\Sigma$ be the set of zeros of $(X,\omega)\in \cH$ and let $\gamma_1, \ldots, \gamma_n$ be any basis of the relative homology group $H_1(X, \Sigma, \bZ)$,
where $n=2g+s-1$. The period coordinate maps defined by
$$
(X,\omega)\mapsto\left( \int_{\gamma_1} \omega, \ldots, \int_{\gamma_n}\omega \right)
$$
provide $\cH$ with an atlas of charts to $\bC^n$ with transition functions in $GL(n, \bZ)$ (see~\cite{K}). If $\gamma$ is in absolute homology $H_1(X, \bZ)$, the integral $\int_\gamma \omega$ is called an absolute period, and if $\gamma$ is merely in relative homology $H_1(X, \Sigma, \bZ)$ it is called a relative period.
\par
Over each stratum there are flat bundles $H^1_{rel}$ and $H^1$ whose fibers over $(X,\omega)$ are $H^1(X, \Sigma, \bC)$ and $H^1(X, \bC)$ respectively. There is a natural map $p:H^1_{rel}\to H^1$.
\reversemarginpar

\par
\begin{defn} An \emph{affine invariant submanifold} of $\cH$ is  a properly immersed manifold $\cM\hookrightarrow \cH$ such that each point of $\cM$ has a neighborhood whose image is given  by the set of surfaces in an open set satisfying a set of real linear equations in  period coordinates.
\par
These linear equations define a flat subbundle $T(\cM)$ of the pull back of $H^1_{rel}$ to $\cM$. The fiber $T_x (\cM)$ at a point $x\in \cM$ is given by the space of all relative cohomology classes that satisfy all the linear equations which define the image of a neighborhood of $x$ in period coordinates.
\end{defn}
It follows directly from the definition that $\cM$ is $GL(2,\bR)$-invariant.
For notational simplicity, we will treat affine invariant submanifold as subsets of strata. However technically all arguments should be phrased in the  manifold $\cM$ rather than the stratum, since for example the tangent space cannot be defined at points of self-crossing of the image of $\cM$ in the stratum. A point in $\cM$ can be considered to be a point in $(X,\omega)$ in the image of $\cM$ together with a subspace of $H^1(X, \Sigma, \bC)$. The subspace is the tangent space $T_{(X,\omega)}(\cM)$, and is uniquely determined by $(X,\omega)$ unless $(X,\omega)$ is in the self-crossing locus of $\cM$, in which case there may be finitely many choices. The self-crossing locus consists of a finite union of smaller dimensional affine invariant submanifolds.

\par
\begin{defn}
A \emph{rel deformation} is a path of translation surfaces $\{(X_t, \omega_t)\}_{t\in [0,1]}$ along which all absolute periods $\int_\gamma \omega_t$ stay constant.
\end{defn}

We define a path to be a continuous map of a closed interval into a space.

\begin{lem}
The bundle $\ker(p)$ is a trivial flat bundle over $\cH$. 
\end{lem}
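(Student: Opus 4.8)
The plan is to identify $\ker(p)$ with the reduced zeroth cohomology of the labeled zero set $\Sigma$, and then to observe that this identification is compatible with the flat Gauss--Manin connection on $H^1_{rel}$, the monodromy being trivial precisely because the zeros are labeled.

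First I would write down the long exact sequence in cohomology of the pair $(X,\Sigma)$ with complex coefficients,
$$
H^0(X,\bC)\to H^0(\Sigma,\bC)\to H^1(X,\Sigma,\bC)\xrightarrow{\ p\ } H^1(X,\bC)\to H^1(\Sigma,\bC).
$$
Since $\Sigma$ is a finite set of $s$ points, $H^1(\Sigma,\bC)=0$, so $p$ is surjective. By exactness, $\ker(p)$ is the image of $H^0(\Sigma,\bC)$ in $H^1(X,\Sigma,\bC)$, which is canonically isomorphic to the cokernel of the diagonal map $H^0(X,\bC)=\bC\to H^0(\Sigma,\bC)=\bC^{s}$, i.e.\ to the reduced cohomology $\widetilde{H}^0(\Sigma,\bC)\cong\bC^{s-1}$. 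This records the rank of $\ker(p)$, consistent with the dimension count $n-2g=(2g+s-1)-2g=s-1$, and, more importantly, it produces a canonical fiberwise identification $\ker(p)\cong\widetilde{H}^0(\Sigma,\bC)$.

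Next I would promote this to an isomorphism of flat bundles. The flat structure on $H^1_{rel}$ is the Gauss--Manin connection coming from the local system $H^1(X,\Sigma,\bZ)$, and the connecting homomorphisms in the exact sequence above are morphisms of local systems; hence they transport the flat structure to $\ker(p)$ and identify it, as a flat bundle, with the one whose fibers are $\widetilde{H}^0(\Sigma,\bC)$. The monodromy of the latter is governed entirely by the action of $\pi_1(\cH)$ on the set $\Sigma$ by permutations. Because we work in strata with labeled zeros and ordered marked points, this permutation action is trivial: every loop in $\cH$ preserves the labeling of each point of $\Sigma$, so no monodromy permutes the zeros. Concretely, fixing the labeled zeros $z_1,\dots,z_s$, the classes corresponding to $[\delta_{z_i}]-[\delta_{z_1}]$ for $i=2,\dots,s$ are flat global sections that furnish a global frame, trivializing the bundle.

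The one step requiring genuine care is this last point: the triviality (as opposed to mere flatness with finite monodromy) hinges on the absence of monodromy permuting the zeros, which is exactly what the convention of the preceding subsection---that $\cH(k_1,\dots,k_s)$ carries labeled zeros and labeled marked points---guarantees. Without labeling, zeros of equal order could be interchanged along loops, and one would obtain a flat bundle with finite but possibly nontrivial monodromy rather than the trivial bundle claimed. I would therefore make explicit that it is the labeling that upgrades the canonical flat identification $\ker(p)\cong\widetilde{H}^0(\Sigma,\bC)$ to a global trivialization.
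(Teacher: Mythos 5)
Your proof is correct, but it takes a more structural route than the paper's, so a comparison is worthwhile. The paper argues with bare hands: it sends $v\in\ker(p)$ to the tuple $\left(v(\gamma_i)\right)_{i=1}^{s-1}$, where $\gamma_i$ is any path from the $i$-th labeled zero $p_i$ to $p_{i+1}$, and observes that this is well defined precisely because $v$ vanishes on absolute homology (different choices of $\gamma_i$ differ by absolute classes); the labeling then makes this choice-free fiberwise isomorphism with $\bC^{s-1}$ into a global trivialization. Your argument is the cohomological dual of this: your identification $\ker(p)\cong\widetilde{H}^0(\Sigma,\bC)$ via the connecting homomorphism $\partial^*$ of the pair $(X,\Sigma)$ pairs against the paper's relative cycles, since $\langle \partial^*\delta_{z_j},\gamma_i\rangle=\delta_{z_j}(\partial\gamma_i)$, and the path-independence that the paper checks by hand is exactly what exactness of the sequence absorbs for you. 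What your version buys is a cleaner conceptual statement---$\ker(p)$ is the flat bundle induced by the permutation action of $\pi_1(\cH)$ on the labeled set $\Sigma$, so triviality is manifestly equivalent to the labeling hypothesis---and your closing caveat (without labels one would get finite but possibly nontrivial monodromy) is a genuine point the paper leaves implicit. What the paper's version buys is brevity and self-containedness: no long exact sequence and no local-system formalism, just one evaluation map whose well-definedness and flatness are immediate.
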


%
\begin{proof}
Suppose the zeros of $\omega$ are $p_1, \ldots, p_s$.  Since the zeros are labelled, the ordering of the $p_i$ can be chosen consistently over $\cH$. At any point $(X, \omega)\in \cH$, for each $i=1, \ldots, s-1$, let $\gamma_i$ be a path on $X$ from $p_i$ to $p_{i+1}$. The map from the fiber of $\ker(p)$ to $\bC^{s-1}$ given by
$$v\mapsto \left( v(\gamma_i)\right)_{i=1}^{s-1}$$
does not depend on the choice of $\gamma_i$ and provides an isomorphism between $\ker(p)$ and the trivial bundle $\bC^{s-1}$ over $\cH$.

The $\gamma_i$ cannot be chosen to be global flat sections, but this is not relevant because the map does not depend on the choice of $\gamma_i$. The reason it does not depend on the choice of $\gamma_i$ is because different $\gamma_i$ differ by absolute homology classes, and by assumption the integral of $v$ over any absolute homology class is zero.
\end{proof}


\begin{notation}
Because $H^1(X,\Sigma, \bC)$ provides local coordinates for $\cH$ at $(X,\omega)$, for any sufficiently small $v\in H^1(X,\Sigma, \bC)$ there is a nearby point in $\cH$ whose period coordinates are given by those of $(X,\omega)$ plus $v$. We will denote this surface by $(X,\omega)+v$.
\end{notation}
\par
\begin{lem}\label{L:gonrel}
Fix $(X,\omega) \in \cH$. There is a neighborhood $U$ of $0$ in $\ker(p)$ such that on $U$ the map $v\mapsto (X,\omega)+v$ is well defined and injective, and such that if $v\in U$ then $tv\in U$ for all $t\in [0,1]$. Moreover, if $g\in SL(X,\omega)$ and $v, gv\in U$ then $g((X,\omega)+v)=(X,\omega)+gv$. 
\end{lem}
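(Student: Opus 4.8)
The plan is to realize the operation $v\mapsto (X,\omega)+v$ as the inverse of a period coordinate chart and then exploit the fact that the $GL(2,\bR)$ action is \emph{linear} in period coordinates. Fix a basis $\gamma_1,\dots,\gamma_n$ of $H_1(X,\Sigma,\bZ)$ and let $\Phi$ denote the resulting period coordinate chart, a biholomorphism from a neighborhood of $(X,\omega)$ onto an open subset of $H^1(X,\Sigma,\bC)$ that carries $(X,\omega)$ to its period class $[\omega]$. By the Notation above, $(X,\omega)+v=\Phi^{-1}([\omega]+v)$ whenever $[\omega]+v$ lies in the image of $\Phi$.

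First I would dispatch the elementary assertions. Since $\Phi$ is a homeomorphism, so is $\Phi^{-1}$, and the affine map $v\mapsto [\omega]+v$ is injective; hence $v\mapsto (X,\omega)+v$ is injective wherever it is defined. The set of $v\in\ker(p)$ for which $[\omega]+v$ lies in the image of $\Phi$ is an open neighborhood of $0$ in the fiber $\ker(p)_{(X,\omega)}$, which is a complex vector space. Choosing $U$ to be a round ball about $0$ inside this neighborhood makes the map well defined and injective on $U$; and a ball is convex, hence star-shaped about $0$, so $tv\in U$ for all $t\in[0,1]$ whenever $v\in U$.

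The substance of the lemma is the equivariance, and here the key point is that in period coordinates the action of $g$ is a single \emph{linear} map. The element $g\in SL(X,\omega)$ is the derivative of an affine automorphism $\phi_g$ of $(X,\omega)$, and $\phi_g$ identifies $g\cdot(X,\omega)$ with $(X,\omega)$ as a translation surface. Transported into the chart $\Phi$, the self-map $(Y,\eta)\mapsto g\cdot(Y,\eta)$ of $\cH$ near $(X,\omega)$ becomes the composition of the coefficientwise action of $g$ on $H^1(X,\Sigma,\bC)$ with the map on cohomology induced by $\phi_g$; this composition is precisely the linear action of $g$ on cohomology, it fixes $[\omega]$, and it preserves $\ker(p)$ since both factors are compatible with the map $p$. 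Writing $g$ also for this linear map, I obtain, whenever $v$ and $gv$ lie in $U$,
\[
g\cdot\big((X,\omega)+v\big)=\Phi^{-1}\big(g([\omega]+v)\big)=\Phi^{-1}\big([\omega]+gv\big)=(X,\omega)+gv,
\]
where the middle equality uses $g[\omega]=[\omega]$ and the hypothesis $gv\in U$ guarantees that $[\omega]+gv$ lies in the image of $\Phi$.

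The main obstacle is the verification, in this last step, that the $GL(2,\bR)$ action really is linear throughout the chart, and not merely to first order at the fixed point, once the identification $g\cdot(X,\omega)\cong(X,\omega)$ is incorporated. This reduces to checking that the coefficientwise $GL(2,\bR)$ action on period classes and the map induced by the fixed automorphism $\phi_g$ are each globally linear and defined on all nearby surfaces, so that their composition is the restriction of one linear map to the chart domain. The remaining bookkeeping about markings (and about compatibility with the labeling of zeros and the level structure, so that $\phi_g$ represents the same point of $\cH$) is routine but must be carried out carefully, the essential point being that the single map $\phi_g$ does not vary with $(Y,\eta)$.
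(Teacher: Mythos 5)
Your strategy is the same as the paper's: the first claims follow from period coordinates being honest local coordinates, and the equivariance follows because the action of $g\in SL(X,\omega)$, read in the chart through the affine automorphism $\phi_g$, is a single globally linear map fixing $[\omega]$ (the paper packages this as a direct computation of the periods of $g((X,\omega)+v)$). But there is a genuine gap at the final step, exactly where the paper writes ``using the triviality of $\ker(p)$.''

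In your displayed chain of equalities, $gv$ denotes the image of $v$ under the composed map (the coefficientwise action of $g$ together with $\phi_g^*$), whereas in the lemma $gv$ denotes the coefficientwise action of $g$ on $\ker(p)\cong\bC^{s-1}$ --- the standard $GL(2,\bR)$-action on the $\bR^2$ of coefficients, as the paper explains immediately after the statement. These two linear maps are different on $H^1(X,\Sigma,\bC)$: the composed map fixes $[\omega]$, while the coefficientwise action sends $[\omega]$ to $g[\omega]\neq[\omega]$ whenever $g\neq \Id$. So your assertion that the composition ``is precisely the linear action of $g$ on cohomology'' is either false (if it means the coefficientwise action) or it leaves the two meanings of $gv$ unreconciled (if it merely names the composed map); in the latter case both your conclusion and your hypothesis $gv\in U$ involve a different map than the one in the statement being proved. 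The missing ingredient is that the two maps \emph{agree on} $\ker(p)$, equivalently that $\phi_g^*$ restricts to the identity on $\ker(p)$: because zeros are labelled, $\phi_g$ fixes each zero $p_i$, hence carries any path from $p_i$ to $p_{i+1}$ to another such path, and an element of $\ker(p)$ takes the same value on any two such paths. This is precisely the canonical trivialization of $\ker(p)$ established in the lemma immediately preceding this one, and it is what the paper's proof invokes. Your closing paragraph relegates what remains to ``routine'' marking bookkeeping whose essential point is that $\phi_g$ does not vary with $(Y,\eta)$; that is not the essential point --- the identity $\phi_g^*|_{\ker(p)}=\mathrm{id}$ is, and without stating and proving it the argument does not yield the lemma as stated.
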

\par
 Here $g$ acts on $v\in \ker(p)$ by acting on the real and imaginary parts. That is, $\ker(p)=\bC^{s-1}$, and the action on coordinates is  the standard linear action of $GL(2, \bR)$ on the plane $\bC\simeq \bR^2$.
\begin{proof}
The first claim follows from the fact that  period coordinates are indeed local coordinates.

For the final claim, fix $g\in GL(2, \bR)$ and set $(X_v, \omega_v)=(X,\omega)+v$ and $(X_v^g, \omega_v^g)=g((X,\omega)+v).$

Now, if $\gamma$ is any relative cycle,
\begin{eqnarray*}
\int_\gamma \omega_v^g&=& g \cdot \int_\gamma \omega_v
\\&=& g\cdot \left(\int_\gamma \omega + v(\gamma)\right)
\\&=&g\cdot \int_\gamma \omega + g\cdot v(\gamma).
\end{eqnarray*}
This proves the result, using the triviality of $\ker(p)$.

%
 \end{proof}

\subsection{Rank of an affine invariant submanifold} \label{sec:rank}
We define the {\em rank} of an affine invariant submanifold $\cM$ by $\frac{1}{2}\ \mathrm{dim}_\bC p(T_{(X,\omega)}(\cM))$ for any $(X,\omega)\in \cM$.
(Note that by the work of Avila-Eskin-M\"oller~\cite{AEM}, $p(T_{(X,\omega)}(\cM))$ is symplectic, and thus has even dimension over $\bC$).
\par
\begin{thm}\label{thm:W}
An affine invariant submanifold $\cM$ consists entirely of completely periodic surfaces if and only if $\frac{1}{2} \mathrm{dim}_\bC p(T_{(X,\omega)}(\cM))$.
\end{thm}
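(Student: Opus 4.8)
The displayed inequality in the statement is evidently truncated; I read the intended claim as: $\cM$ consists entirely of completely periodic surfaces if and only if $\frac{1}{2}\dim_\bC p(T_{(X,\omega)}(\cM))=1$, i.e. $\cM$ has rank $1$. Since $p(T_{(X,\omega)}(\cM))$ always contains the tautological plane $\span_\bC\{[\omega],\overline{[\omega]}\}$ and is symplectic by Avila--Eskin--M\"oller \cite{AEM}, the rank is at least $1$, with equality precisely when $p(T_{(X,\omega)}(\cM))$ equals this tautological plane. The engine for both directions is the Cylinder Deformation Theorem \cite{Wcyl}: for any $\cM$-parallel equivalence class $\mathcal{C}$ of parallel cylinders on $(X,\omega)\in\cM$, the standard shear and the standard dilation of $\mathcal{C}$ are tangent to $\cM$.

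For the forward implication I would prove the contrapositive one direction at a time: if some $(X,\omega)\in\cM$ has a cylinder in a direction (say horizontal) in which it is not completely periodic, then the rank is at least $2$. Decomposing the horizontal foliation into cylinders and minimal components, fix a minimal component $M$ (which has positive genus and positive area) and a horizontal cylinder $C$, and let $\mathcal{C}$ be the $\cM$-parallel class of $C$. By \cite{Wcyl} the standard dilation $a_{\mathcal{C}}$ is tangent to $\cM$. The key observation is that $a_{\mathcal{C}}$ changes only the periods of cycles meeting $\mathcal{C}$, so it vanishes on every absolute cycle supported in $M$; meanwhile $\Re[\omega]$ and $\Im[\omega]$ restrict to linearly independent functionals on $H_1(M;\bR)$, since a positive-genus minimal component cannot have all its periods collinear. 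Hence $p(a_{\mathcal{C}})$ lies outside the tautological plane, enlarging the complex span of $p(T_{(X,\omega)}(\cM))$ to complex dimension at least $3$; being symplectic, this dimension is even, hence at least $4$, so the rank is at least $2$, a contradiction.

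For the converse I would again argue contrapositively: assuming rank at least $2$, I would exhibit a surface in $\cM$ carrying a direction with both a cylinder and a minimal component, violating complete periodicity. Starting from a horizontally periodic $(X,\omega)\in\cM$, one shows that the horizontal cylinders fall into at least two distinct $\cM$-parallel classes whose circumference ratios are not locally constant on $\cM$ (otherwise $p(T_{(X,\omega)}(\cM))$ would collapse to the tautological plane). Shearing one class against the other by a generic irrational amount, which stays inside $\cM$ by \cite{Wcyl}, destroys the commensurability that a nearby direction needs in order to remain periodic, producing a direction with a cylinder but no full cylinder decomposition.

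The main obstacle is this converse: manufacturing a genuinely non-periodic direction out of higher rank is delicate and rests on the finer dictionary between the geometric cylinder/minimal-component decomposition and $\dim_\bC p(T_{(X,\omega)}(\cM))$ furnished by \cite{Wcyl}. A secondary technical point, already visible in the forward direction, is the degenerate situation in which every cylinder in $\mathcal{C}$ is separating, so that $p(a_{\mathcal{C}})=0$ and the dilation is invisible in absolute cohomology; handling it requires either selecting a non-separating configuration or locating the extra rank in the relative-to-absolute interplay rather than in a single cylinder deformation.
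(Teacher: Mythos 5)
You read the truncated statement correctly (rank $=1$), but note first that the paper does not prove Theorem~\ref{thm:W} at all: it is imported background, with the implication ``rank one implies every surface is completely periodic'' attributed to \cite{Wcyl} (simplest proof in \cite{Wsurvey}) and the converse attributed to \cite{Wcyl}. Measured against those sources, your first argument (which you label the ``forward implication''; in the statement's phrasing it is actually the backward one) is correct and is essentially the proof in \cite{Wsurvey}: a cylinder and a minimal component $M$ in the same direction give, via the Cylinder Deformation Theorem, a tangent vector whose absolute part annihilates $H_1(M;\bR)$, while $\Re(\omega)$ and $\Im(\omega)$ restrict to independent functionals on $H_1(M;\bR)$, so the rank is at least $2$ once symplecticity \cite{AEM} is invoked. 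However, the ``degenerate situation'' you hold in reserve as a secondary obstacle is vacuous: the core curve of a cylinder has holonomy equal to its circumference, hence is never null-homologous, and $\Re(\omega)$ evaluates to $\sum_j h_j c_j>0$ on the height-weighted sum of suitably oriented core curves, so $p(a_{\mathcal{C}})\neq 0$ always.

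The genuine gap is in the other implication (the one the paper calls ``the converse''): rank at least $2$ implies $\cM$ contains a non--completely periodic surface. Two concrete problems. First, your opening claim---that the horizontal cylinders of a horizontally periodic surface in a higher-rank $\cM$ fall into at least two $\cM$-parallel classes with non-locally-constant circumference ratios---is false for an arbitrary such surface: a one-cylinder horizontally periodic surface in the stratum $\cH(2)$ (which has rank $2$) has a single class, and this in no way collapses $p(T_{(X,\omega)}(\cM))$ to the tautological plane, so your parenthetical justification does not work. Producing a usable surface is real work: in \cite{Wcyl} one passes, by a Smillie--Weiss-based induction, to a horizontally periodic surface whose twist space equals its cylinder preserving space, and only for such surfaces is the number of $\cM$-parallel classes bounded below by the rank. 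Second, and more seriously, the final step is not an argument: shearing one class against another keeps the surface horizontally periodic, and to contradict complete periodicity you must exhibit a \emph{specific} direction on the deformed surface containing a cylinder together with a minimal component. Nothing in your sketch identifies such a direction; moreover, ``destroying commensurability'' of circumferences or moduli is aimed at the wrong property---it obstructs complete \emph{parabolicity}, not complete \emph{periodicity}, since a periodic direction with incommensurable moduli is still periodic. This missing construction is precisely the content of Section~8 of \cite{Wcyl}, to which the paper defers, and which you yourself flag as ``the main obstacle''; as written, your proposal establishes only one of the two implications.
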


That  $\frac{1}{2} \mathrm{dim}_\bC p(T_{(X,\omega)}(\cM))=1$ implies all surfaces in $\cM$ are completely periodic  was established in general in \cite{Wcyl}, having previously been known in a number of special cases \cite{LN, Ca, Mc7, V}. The simplest known proof, which in fact gives a stronger result, may be found in \cite{Wsurvey}. See \cite{Wcyl} for the converse.

\par
When $\cM$ is a rank 1 submanifold, $p(T_{(X,\omega)}(\cM))$ is spanned by the real and imaginary parts of the absolute cohomology class of $\omega$. Hence $\mathrm{span}(\mathrm{Re}(\omega),\mathrm{Im}(\omega))$ is locally constant, because  $p(T(\cM))$ is a flat subbundle. One thinks of $\mathrm{span}(\mathrm{Re}(\omega),\mathrm{Im}(\omega))$ as the directions coming from  $GL(2, \bR)$, because this subspace is always contained in $T(\cM)$ as a consequence of $GL(2, \bR)$ invariance.
The following basic result underlies all of our analysis.

\begin{prop}[\cite{Wsurvey}]\label{P:SL2plusrel}
An affine invariant submanifold $\cM$ is rank 1 if and only if for each $(X, \omega)\in \cM$ there is a neighborhood $U$ of the identity in $GL(2, \bR)$ and a neighborhood $V$ of $0$ in $\ker(p)\cap T_{(X,\omega)}(\cM)$ such that the map from $U\times V \to \cM$ given by $(g, v)\mapsto g(X,\omega)+v$ is a diffeomorphism onto a neighborhood of $(X, \omega)$ in $\cM$.
\end{prop}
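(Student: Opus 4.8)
The plan is to deduce both implications from a single computation: the derivative of the map $\Phi\colon U\times V\to\cM$, $\Phi(g,v)=g(X,\omega)+v$, at the base point $(\Id,0)$, together with a dimension count. Write $T=T_{(X,\omega)}(\cM)$ and $k=\dim_\bC\big(\ker(p)\cap T\big)$. Applying $p$ to $T$ gives the short exact sequence
\[
0\longrightarrow \ker(p)\cap T\longrightarrow T\xrightarrow{\ p\ } p(T)\longrightarrow 0,
\]
so $\dim_\bC T=k+\dim_\bC p(T)$ and, since $T$ is a complex subspace, $\dim_\bR\cM=2\dim_\bC T$. By definition $\cM$ is rank $1$ precisely when $\dim_\bC p(T)=2$.

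For the forward implication, assume $\cM$ is rank $1$. First I would confirm that $\Phi$ really lands in $\cM$: because $\cM$ is locally an affine subspace in period coordinates with tangent space $T$, the surface $(X,\omega)+v$ lies in $\cM$ for $v\in\ker(p)\cap T$ small, so $g\big((X,\omega)+v\big)\in\cM$ by $GL(2,\bR)$-invariance, and by Lemma~\ref{L:gonrel} this equals $g(X,\omega)+gv$. I would then compute $d\Phi_{(\Id,0)}$. The $V$-directions contribute exactly $\ker(p)\cap T$, while the $GL(2,\bR)$-directions contribute the image $O$ of the infinitesimal action $\mathfrak{gl}(2,\bR)\to H^1(X,\Sigma,\bC)$, which sends $\left(\begin{smallmatrix}\alpha&\beta\\\gamma&\delta\end{smallmatrix}\right)$ to $(\alpha[\Re\omega]+\beta[\Im\omega])+i(\gamma[\Re\omega]+\delta[\Im\omega])$ in relative cohomology. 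Thus $O=\span_\bC\{[\Re\omega],[\Im\omega]\}$ is $4$-real-dimensional, and $O\subseteq T$ by $GL(2,\bR)$-invariance.

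The crux is the decomposition $T=O\oplus(\ker(p)\cap T)$. Since $[\Re\omega]$ and $[\Im\omega]$ are $\bR$-independent in absolute cohomology (their cup product is the positive area), $p$ is injective on $O$ with image $\span_\bC\{[\Re\omega],[\Im\omega]\}\subset H^1(X,\bC)$; in particular $O\cap\ker(p)=0$. Rank $1$ says exactly that $p(T)$ is this same $2$-complex-dimensional span, so $p(O)=p(T)$, and with the exact sequence this forces $O\oplus(\ker(p)\cap T)=T$. Hence $d\Phi_{(\Id,0)}$ is an isomorphism onto $T$; since $\dim_\bR(U\times V)=4+2k=2\dim_\bC T=\dim_\bR\cM$, the inverse function theorem applied inside $\cM$ gives a local diffeomorphism, and shrinking $U$ and $V$ yields a diffeomorphism onto an open neighborhood of $(X,\omega)$ in $\cM$. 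For the converse, a diffeomorphism forces $\dim_\bR(U\times V)=\dim_\bR\cM$, i.e. $4+2k=2\big(k+\dim_\bC p(T)\big)$, so $\dim_\bC p(T)=2$ and $\cM$ is rank $1$.

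I expect the main obstacle to be the forward direction, and within it two points that need care rather than real difficulty: checking that $\Phi$ maps into $\cM$, which rests on the affine structure of $\cM$ in period coordinates and the triviality of $\ker(p)$ together with Lemma~\ref{L:gonrel}; and the direct-sum identity $T=O\oplus(\ker(p)\cap T)$, which is the only place the rank $1$ hypothesis enters essentially, through the equality $p(O)=p(T)$. Once these are in hand, the dimension bookkeeping and the inverse function theorem are routine.
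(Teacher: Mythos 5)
Your proof is correct, and it is essentially the standard argument behind this statement: the paper itself gives no proof (it cites \cite{Wsurvey}), and the route taken there is the same decomposition $T_{(X,\omega)}(\cM)=\span_\bC\{[\Re\omega],[\Im\omega]\}\oplus\bigl(\ker(p)\cap T_{(X,\omega)}(\cM)\bigr)$, valid exactly in rank 1, followed by the inverse function theorem and the dimension count for the converse. Two small points to tighten: Lemma~\ref{L:gonrel} is stated only for $g\in SL(X,\omega)$, but its proof establishes the general identity $g((X,\omega)+v)=g(X,\omega)+gv$ for all $g\in GL(2,\bR)$, which is the form you invoke; and you should say explicitly that $\ker(p)\cap T_{(X,\omega)}(\cM)$ is invariant under the linear $GL(2,\bR)$-action (both $\ker(p)$ and $T_{(X,\omega)}(\cM)$ are cut out by real-coefficient equations), since your verification that $\Phi$ maps into $\cM$ quietly uses this.
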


\subsection{Stable directions}
Let $\cM$ be an affine invariant submanifold of rank one. We say that a surface $(X,\omega) \in \cM$ is {\em $\cM$-stably periodic} in some direction  if the surface  is periodic in this direction and if all saddle connections in this direction remain parallel on all sufficiently nearby surfaces in $\cM$. A periodic direction that is not $\cM$-stable will be called {\em $\cM$-unstable}.

\begin{rem}
If $(X,\omega)$ is horizontally $\cM$-stably periodic then there is an open subset $U$ of $\ker(p)\cap T_{(X,\omega)}(\cM)$ containing $0$ such that for any $v\in U$, $(M,\omega)+v$ is also horizontally $\cM$-stably periodic with the same cylinder diagram (compare to~\cite{LN2}).
\end{rem}

\begin{rem}
If $\cM$ is of higher rank, then no surface in $\cM$ admits a $\cM$-stable periodic direction \cite{Wcyl}.
\end{rem}

\begin{rem}
$\cM$-stability can be described in the language of \cite{Wcyl} by saying that the twist space is equal to the cylinder preserving space.
\end{rem}

\begin{rem}If $\cM$ is rank 1 and $(X,\omega) \in \cM$ is horizontally periodic and $U$ is a neighborhood of $0$ in $\ker(p)\cap T_{(X,\omega)}(\cM)$ on which the map $v \mapsto (X,\omega)+v$ is well defined and injective, then there is an open dense subset $V\subset U$ such that for any $v\in V$, $(X,\omega)+v$ is horizontally $\cM$-stably periodic. This is because the condition of being  $\cM$-unstable periodic gives rise to  linear equations that do not vanish identically on $T_{(X,\omega)}(\cM)\cap\ker(p)$.
\end{rem}
The local description of rank 1 affine  invariant submanifold in terms of the flat geometry of surfaces is crucial for our approach. In particular none of our arguments are applicable in higher rank.

\subsection{Holonomy field, trace field and (affine) field of definition}
For more details on the results in this section, see~\cite{KS,Wfield}.
\par
The {\em holonomy field} of a translation surface is the smallest subfield of $\bR$ such that the absolute periods are a vector space of dimension two over this field \cite{KS}.  The {\em trace field} of a translation surface is the subfield of $\mathbb R$ generated by the traces of elements of its Veech group. This is also $\bQ[\lambda+\lambda^{-1}]$ where $\lambda$ is the largest eigenvalue of {\em any} hyperbolic element of $SL(X,\omega)$. When $SL(X,\omega)$ contains a hyperbolic, the trace field and the holonomy field coincide.
\par
The (affine) field of definition of an affine invariant submanifold $\cM$ (introduced in~\cite{Wfield}), denoted $\bk(\cM)$, is defined to be the smallest subfield of $\bR$ such that locally $\cM$ can be described by homogeneous linear equations in period coordinates with coefficients in $\bk(\cM)$. It is a number field~\cite{Wfield} and is always totally real \cite{Fi1}.
\par
\begin{lem}\label{L:sameholfield}
If $\cM$ is a rank 1 affine invariant submanifold then any translation surface $(X,\omega)\in \cM$ has holonomy field equal to $\bk(\cM)$.
\end{lem}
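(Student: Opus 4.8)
The plan is to reinterpret both the holonomy field and the affine field of definition $\bk(\cM)$ as the field of definition of a single linear-algebraic object: the real $2$-plane
\[
V = \span(\Re(\omega), \Im(\omega)) \subseteq H^1(X, \bR),
\]
viewed with respect to the rational structure $H^1(X, \bQ)$. Because $\cM$ is rank $1$, the excerpt records that $p(T_{(X,\omega)}(\cM)) = \span_\bC(\Re(\omega), \Im(\omega))$; since $\Re(\omega)$ and $\Im(\omega)$ are $\bC$-independent real classes, this is exactly the complexification $V \otimes_\bR \bC$. By the \emph{field of definition of $V$} I mean the smallest subfield $f \subseteq \bR$ for which there is a $2$-dimensional $f$-subspace $V_f \subseteq H^1(X, \bQ) \otimes_\bQ f$ with $V = V_f \otimes_f \bR$.

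First I would check that this field $f$ is the holonomy field. Under the evaluation pairing $H^1(X, \bR) \times H_1(X, \bZ) \to \bR$, each absolute cycle $\gamma$ restricts to a linear functional on $V$ whose coordinates in the basis dual to $(\Re(\omega), \Im(\omega))$ are $(\Re \int_\gamma \omega, \Im \int_\gamma \omega)$, i.e.\ the holonomy vector of $\gamma$. Thus the holonomy vectors are precisely the images of $H_1(X, \bZ)$ in $V^*$. Now $V$ is defined over a field $f$ if and only if its annihilator in $H_1(X, \bR)$ is, equivalently if and only if the rank $2$ quotient of $H_1$ dual to $V$ is defined over $f$, equivalently if and only if the holonomy vectors lie in a $2$-dimensional $f$-subspace of $V^*$. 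This last condition is exactly the defining property of the holonomy field, so $f$ equals the holonomy field.

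The inclusion (holonomy field) $\subseteq \bk(\cM)$ is then immediate. Since $\cM$ is defined over $\bk(\cM)$, the tangent space $T_{(X,\omega)}(\cM)$ is the complexification of a $\bk(\cM)$-subspace of $H^1(X, \Sigma, \bQ) \otimes_\bQ \bk(\cM)$. Applying the rationally defined projection $p$ shows that $p(T_{(X,\omega)}(\cM)) = V \otimes_\bR \bC$, and hence $V$ itself, is defined over $\bk(\cM)$; minimality of the holonomy field gives the inclusion.

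The reverse inclusion $\bk(\cM) \subseteq$ (holonomy field) is the main point, and the one place a nontrivial input is needed. The difficulty is that $\bk(\cM)$ is defined through \emph{all} of period coordinates, including the relative periods, while the holonomy field only sees the absolute periods; a priori the relative part $T_{(X,\omega)}(\cM) \cap \ker(p)$ could force $\cM$ to be defined over a field strictly larger than its absolute part $p(T_{(X,\omega)}(\cM))$. I would rule this out by invoking the results of \cite{Wfield}, to the effect that the affine field of definition already equals the field of definition of the absolute part $p(T_{(X,\omega)}(\cM))$. Granting this, $\bk(\cM)$ is the field of definition of $V \otimes_\bR \bC$, which is the field of definition $f$ of $V$, which by the second paragraph is the holonomy field; combined with the third paragraph this yields equality. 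As a byproduct, since $p(T(\cM))$ is a flat subbundle its field of definition is locally constant, so the holonomy field is in fact the same for every surface in $\cM$. The expected obstacle is entirely concentrated in this last inclusion, namely justifying that the relative directions do not enlarge the field of definition beyond what the absolute cohomology already determines.
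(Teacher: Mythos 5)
Your proof is correct, but it takes a genuinely different route from the paper's. The paper's argument is a connectivity/invariance argument: it cites \cite[Theorem 1.1]{Wfield} (that $\bk(\cM)$ is the intersection of the holonomy fields of all surfaces in $\cM$), observes that the holonomy field is unchanged by the $GL(2,\bR)$-action and by rel deformations, and then uses Proposition~\ref{P:SL2plusrel} to connect any two points of a rank 1 submanifold by exactly such moves; hence all surfaces in $\cM$ have one common holonomy field, which must then equal the intersection, i.e.\ $\bk(\cM)$. Your argument is instead pointwise linear algebra: you identify the holonomy field of $(X,\omega)$ with the field of definition of the plane $V=\span(\Re(\omega),\Im(\omega))$ (your duality computation in the second paragraph is correct), and you use rank 1 only through the identity $p(T_{(X,\omega)}(\cM))=V\otimes_\bR\bC$, which makes the inclusion $\mathrm{hol}(X,\omega)\subseteq\bk(\cM)$ an elementary consequence of the $\bQ$-rationality of $p$ and the definition of $\bk(\cM)$. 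For the reverse inclusion you invoke from \cite{Wfield} that $\bk(\cM)$ equals the field of definition of $p(T(\cM))$; this is indeed available there (it follows from the Galois-conjugate decomposition of $H^1$, since the conjugates of $p(T(\cM))$ are in direct sum, hence pairwise distinct), but note that you do not actually need this finer statement: Theorem 1.1 of \cite{Wfield}, the very input the paper uses, already gives $\bk(\cM)\subseteq\mathrm{hol}(X,\omega)$ for every $(X,\omega)\in\cM$, because an intersection is contained in each of its terms. As for what each approach buys: the paper's proof is shorter given that Proposition~\ref{P:SL2plusrel} is already set up, and it shows directly that the holonomy field is constant along $\cM$; yours is pointwise (no connectivity of $\cM$ is needed), isolates precisely where rank 1 enters (the identity $p(T(\cM))=V\otimes_\bR\bC$), and yields the slightly sharper dictionary ``holonomy field $=$ field of definition of $p(T(\cM))$,'' with constancy along $\cM$ recovered as a byproduct of flatness of $p(T(\cM))$.
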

\par
\begin{proof}
By \cite[Theorem 1.1]{Wfield}, $\bk(\cM)$ is the intersection of the holonomy fields of all translation surfaces in $\cM$. By definition, the holonomy field is invariant under both the $GL(2, \bR)$-action and rel deformations.

Since  $\cM$ is rank 1, any two points in $\cM$ may be connected by using the $\mathrm{GL}(2, \bR)$-action and rel deformations. (Indeed, by Proposition \ref{P:SL2plusrel} any path in $\cM$ between nearby points  can be isotoped to a path in $\cM$ which first follows a $GL(2, \bR)$ orbit and then follows $\ker(p)$. Any path in $\cM$ can be broken up into a suite of paths connecting nearby points.) This proves the lemma. 
\end{proof}
\par
\begin{defn}
We will say that $\cM$ is arithmetic if $\bk(\cM)=\bQ$.
\end{defn}
\par

\begin{rem}
In the rank 1 case, it is not hard to show that this definition of arithmetic coincides with the one given in the introduction. 
\end{rem}
\par
\begin{cor}\label{C:same}
If $\cM'\subset \cM$ are rank 1, then $\bk(\cM)=\bk(\cM')$. In particular $\cM$ is arithmetic if and only if
$\cM'$ is arithmetic.
\end{cor}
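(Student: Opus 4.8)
The plan is to deduce the corollary directly from Lemma~\ref{L:sameholfield}, exploiting that the holonomy field is an \emph{intrinsic} invariant of a translation surface, independent of which affine invariant submanifold one regards it as living in. Since both $\cM'$ and $\cM$ are assumed rank 1, Lemma~\ref{L:sameholfield} expresses each of their fields of definition in terms of the holonomy fields of their points, and the inclusion $\cM'\subset\cM$ supplies a single surface to which both expressions simultaneously apply.

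Concretely, first I would choose any surface $(X,\omega)\in\cM'$; such a surface exists since $\cM'$ is a nonempty affine invariant submanifold. Applying Lemma~\ref{L:sameholfield} to the rank 1 submanifold $\cM'$ shows that the holonomy field of $(X,\omega)$ equals $\bk(\cM')$. On the other hand, $(X,\omega)$ also lies in $\cM$ because $\cM'\subset\cM$, and $\cM$ is likewise rank 1, so a second application of Lemma~\ref{L:sameholfield}---this time to $\cM$---shows that the holonomy field of the \emph{same} surface $(X,\omega)$ equals $\bk(\cM)$. Since a translation surface has a single well-defined holonomy field, these two values must agree, yielding $\bk(\cM)=\bk(\cM')$.

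The final assertion is then immediate: by the definition of arithmeticity, $\cM$ is arithmetic precisely when $\bk(\cM)=\bQ$, and likewise for $\cM'$, so the equality $\bk(\cM)=\bk(\cM')$ forces the two arithmeticity conditions to coincide. I do not expect any genuine obstacle here, since Lemma~\ref{L:sameholfield} carries all the substantive content; the only points requiring care are that both submanifolds are assumed rank 1, so that the lemma applies to each, and that the holonomy field does not depend on the ambient submanifold---a fact already built into its intrinsic definition as the smallest subfield of $\bR$ over which the absolute periods form a two-dimensional vector space.
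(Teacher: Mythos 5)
Your proof is correct and is exactly the argument the paper intends: the corollary is stated without proof precisely because it follows by applying Lemma~\ref{L:sameholfield} twice to a single surface $(X,\omega)\in\cM'\subset\cM$ and using that the holonomy field is intrinsic to the surface. No gaps; the rank 1 hypothesis on both submanifolds is used exactly where you say it is.
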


\begin{lem}\label{L:relperiods}
If $(X, \omega)\in \cM$ is a completely parabolic surface, then there exists $g\in GL(2, \bR)$, arbitrarily close to the identity, such that $g(X,\omega)$ has absolute and relative periods in $\bk[i]$, where $\bk = \bk(\cM)$.
\end{lem}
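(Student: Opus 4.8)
The plan is to reduce the lemma to a single rationality statement about a real $2$-plane, to handle its ``absolute'' half using the holonomy field, and to extract its ``relative'' half from the parabolics that complete parabolicity supplies.

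\textbf{Reduction.} First I would set $P=\mathrm{span}_\bR(\Re(\omega),\Im(\omega))\subset H^1(X,\Sigma,\bR)$, a real $2$-plane (nonzero area) contained in $T_{(X,\omega)}(\cM)$ by $GL(2,\bR)$-invariance. For $g\in GL(2,\bR)$ the classes $\Re(g\omega),\Im(g\omega)$ are precisely the $g$-combinations of $\Re(\omega),\Im(\omega)$, so $(\Re(g\omega),\Im(g\omega))$ runs over all ordered bases of $P$, with $g$ near the identity exactly when this basis is near $(\Re(\omega),\Im(\omega))$. Since $g(X,\omega)$ has all periods in $\bk[i]$ if and only if $\Re(g\omega),\Im(g\omega)\in H^1(X,\Sigma,\bk)$, the lemma becomes equivalent to the assertion that $P$ is defined over $\bk$: as $\bk$ is dense in $\bR$, its $\bk$-points are then dense in $P$, so I can choose a $\bk$-rational basis of $P$ arbitrarily close to $(\Re(\omega),\Im(\omega))$ and read off the desired $g$.

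\textbf{Absolute part.} By Lemma~\ref{L:sameholfield} the holonomy field of $(X,\omega)$ is $\bk$, so the absolute periods span a two-dimensional $\bk$-vector space; expanding each absolute period in a fixed $\bk$-basis $\{v_1,v_2\}\subset\bC$ produces classes $A,B\in H^1(X,\bk)$ with $p(\Re\omega),p(\Im\omega)\in\mathrm{span}_\bR(A,B)$. Hence $p(P)\subset H^1(X,\bR)$ is defined over $\bk$, and $p|_P$ is injective since $p(\Re\omega),p(\Im\omega)$ are linearly independent. What remains is to show that the lift $P$ of this $\bk$-rational plane is itself $\bk$-rational.

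\textbf{Relative part.} Here I would invoke complete parabolicity. Cylinder directions being dense, pick two transverse ones; in each, commensurability of the moduli makes a suitable product of Dehn twists an affine automorphism, yielding $\phi,\psi\in SL(X,\omega)$ with derivatives $\left(\begin{smallmatrix}1&s\\0&1\end{smallmatrix}\right)$ and $\left(\begin{smallmatrix}1&0\\s'&1\end{smallmatrix}\right)$. Their pullbacks $\phi^*,\psi^*$ on $H^1(X,\Sigma,\bZ)$ are integer matrices preserving $P$ and acting there by these parabolics; tracking core and crossing curves gives the honest cohomological identities $\phi^*\Re\omega=\Re\omega$, $(\phi^*-I)\Im\omega=s\,\Re\omega$, $\psi^*\Im\omega=\Im\omega$, $(\psi^*-I)\Re\omega=s'\,\Im\omega$. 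Consequently $R:=(\phi^*-I)(\psi^*-I)$ is an integer matrix with $R\,\Re\omega=ss'\,\Re\omega$. Since $\phi\psi$ is hyperbolic, $SL(X,\omega)$ contains a hyperbolic, so its trace field equals its holonomy field $\bk$; as $ss'=\mathrm{tr}(\phi\psi)-2$, we get $ss'\in\bk$. Thus $\Re\omega$ lies in the $\bk$-rational subspace $\ker_\bR(R-ss'I)$, and symmetrically $\Im\omega$ lies in a $\bk$-rational subspace; intersecting these constraints with the $\bk$-rationality of $p(\Re\omega),p(\Im\omega)$ should pin $\Re\omega,\Im\omega$ down as genuinely $\bk$-rational classes, giving $P$ defined over $\bk$.

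\textbf{Main obstacle.} The reduction and the absolute computation are routine; the hard part will be the final step, upgrading ``$\Re\omega,\Im\omega$ lie in $\bk$-rational subspaces'' to ``$P$ is defined over $\bk$.'' The danger is that the $ss'$-eigenspace of $R$ has dimension greater than one, leaving room for an irrational lift along the fibres of $p$. I expect to close this by using complete parabolicity in \emph{all} cylinder directions rather than just two: the resulting parabolics generate a Zariski dense subgroup of $SL(2,\bR)$ whose integral action on $H^1(X,\Sigma,\bR)$, together with the already-established $\bk$-rationality of the absolute periods, should force the relative periods of $\Re\omega$ and $\Im\omega$ into $\bk$.
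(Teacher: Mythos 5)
Your reduction and your absolute-period step are correct, but the proof genuinely stalls exactly where you flag it, and that final step is the entire content of the lemma. Knowing that $\mathrm{Re}(\omega)$ lies in the $\bk$-rational subspace $\ker(R-ss'I)$ and that $p(\mathrm{Re}(\omega))$ lies in the $\bk$-rational plane $\mathrm{span}_\bR(A,B)$ does not, by itself, make $\mathrm{Re}(\omega)$ proportional to a $\bk$-rational class: the danger you name (extra dimensions in the $ss'$-eigenspace, allowing an irrational lift along the fibres of $p$) is real as stated, and your proposed repair --- parabolics in all cylinder directions generating a Zariski dense group --- is not an argument, since Zariski density gives no control whatsoever on eigenspace dimensions, which is precisely what is at issue.

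The observation that closes the gap is one you never use: the twist parabolics can be chosen to fix every zero (take integer full twists in each cylinder, which fix the cylinder boundaries pointwise), and any affine automorphism $\phi$ fixing each zero acts \emph{trivially} on $\ker(p)$: for $v\in\ker(p)$ and $\gamma$ a relative cycle, $\phi_*\gamma-\gamma$ is an absolute cycle, so $\phi^*v(\gamma)=v(\phi_*\gamma)=v(\gamma)$. Hence $R=(\phi^*-I)(\psi^*-I)$ annihilates $\ker(p)$ as well as $\mathrm{Im}(\omega)$, while $R\,\mathrm{Re}(\omega)=ss'\,\mathrm{Re}(\omega)$ with $ss'\neq 0$ (and choosing the twist directions so that $ss'>0$ makes $\phi\psi$ hyperbolic, so the trace field is $\bk$ and $ss'=\mathrm{tr}(\phi\psi)-2\in\bk$). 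Since $p(\mathrm{Re}(\omega)),p(\mathrm{Im}(\omega))$ span $\mathrm{span}_\bR(A,B)$, one has $p^{-1}(\mathrm{span}_\bR(A,B))=\mathrm{span}_\bR(\mathrm{Re}(\omega),\mathrm{Im}(\omega))\oplus\ker(p)$, and on this subspace
\[
(R-ss'I)\bigl(a\,\mathrm{Re}(\omega)+b\,\mathrm{Im}(\omega)+w\bigr)=-ss'\bigl(b\,\mathrm{Im}(\omega)+w\bigr),
\]
so $\ker(R-ss'I)\cap p^{-1}(\mathrm{span}_\bR(A,B))$ is exactly the line $\bR\,\mathrm{Re}(\omega)$. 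An intersection of $\bk$-rational subspaces is $\bk$-rational, and a one-dimensional $\bk$-rational subspace is spanned by a $\bk$-rational vector; thus $\mathrm{Re}(\omega)$ is proportional to a class in $H^1(X,\Sigma,\bk)$, symmetrically (using $(\psi^*-I)(\phi^*-I)$) so is $\mathrm{Im}(\omega)$, your plane is defined over $\bk$, and your reduction finishes the proof. For comparison, the paper's own proof is two lines: Lemma~\ref{L:sameholfield} puts the absolute periods of $g(X,\omega)$ in $\bk[i]$ for suitable $g$ near the identity, and then \cite[Theorem 30]{KS} (or \cite[Theorem 9.4]{Mc6}) states that when $SL(X,\omega)$ contains a hyperbolic element the relative and absolute periods span the same $\bQ$-vector subspace of $\bC$; your relative part is, in effect, an attempted re-proof of that citation (equivalently, of the Thurston--Veech input the paper also invokes).
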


\begin{proof}
This follows from the Thurston-Veech construction \cite{Th, V}. (This uses only that $SL(X,\omega)$ contains two non-commuting parabolic elements.)

Alternatively, by the definition of holonomy field there is  $g\in GL(2, \bR)$ arbitrarily close to the identity such that $g(X,\omega)$ has absolute periods in $\bk[i]$. By \cite[Theorem 30]{KS} or \cite[Theorem 9.4]{Mc6}, if $SL(X, \omega)$ has a hyperbolic element, then the relative and absolute periods span the same $\bQ$ vector subspace of $\bC$. (This uses only that $SL(X,\omega)$ contains a hyperbolic element.)
\end{proof}
\par
Suppose that $\bk$ is a totally real subfield of $\bR$ of degree $d$ over $\bQ$. We denote the field embeddings of $\bk$ into $\bR$ by $\iota_1, \ldots, \iota_d$, where $\iota_1$ is the identity embedding.

Consider a subgroup $\Gamma \subset SL(2,\bk)$. By applying $\iota_i$ to each matrix entry, we obtain a homomorphism $\rho_i:\Gamma \to SL(2,\bR)$ satisfying $tr(\rho_i(g))=\iota_i(tr(g))$.
Define $\rho:\Gamma \to SL(2, \bR)^d$ by $\rho(g)=(\rho_1(g), \ldots, \rho_d(g))$. 
\par
The next proposition is a consequence of~\cite[Proposition 2.1 and Corollary 2.2]{Geninska}.

\begin{prop}\label{P:Zdense}
Let $\Gamma \subset SL(2,\bk)$ be a non-elementary subgroup having a totally real trace field $\bk$ of degree $d$ over $\bQ$. If there exists $g\in \Gamma$ with eigenvalue $\lambda$ such that $\bk=\bQ[\lambda^2+\lambda^{-2}]$, then the image $\rho(\Gamma)$ of $\Gamma$ in $SL(2,\bR)^d$ is Zariski dense. 
\end{prop}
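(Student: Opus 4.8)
The plan is to prove this by passing to the complexification: let $H$ denote the Zariski closure of $\rho(\Gamma)$ inside $SL(2,\bC)^d$, and note that since $SL(2,\bR)$ is Zariski dense in $SL(2,\bC)$, showing $H = SL(2,\bC)^d$ is equivalent to the asserted density of $\rho(\Gamma)$ in $SL(2,\bR)^d$. Let $p_i\colon SL(2,\bC)^d\to SL(2,\bC)$ be the $i$-th projection. I would first record that each projection is onto, i.e. $p_i(H)=SL(2,\bC)$. Indeed, a subgroup of $SL(2,\bC)$ is Zariski dense if and only if it is not virtually solvable (every proper Zariski-closed subgroup is virtually solvable), and this is precisely non-elementarity. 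Since $\iota_i$ is a field embedding, $\rho_i$ is injective, so $\rho_i(\Gamma)\cong\Gamma$ is non-elementary, hence Zariski dense, giving $p_i(H)=SL(2,\bC)$.

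The core of the argument is a Goursat-type structure theorem for subgroups of a product of copies of the almost simple group $SL(2)$. Passing to Lie algebras, $\mathfrak{h}=\mathrm{Lie}(H)\subseteq\mathfrak{sl}_2^{\oplus d}$ surjects onto each simple summand $\mathfrak{sl}_2$, so $\mathfrak{h}$ is a direct sum of diagonally embedded copies of $\mathfrak{sl}_2$, the diagonal embeddings being assembled from Lie algebra isomorphisms $\mathfrak{sl}_2\to\mathfrak{sl}_2$. As $\mathfrak{sl}_2$ has no outer automorphisms, every such isomorphism is $\Ad(P)$ for some $P\in PGL(2,\bC)$. Consequently, if $H\neq SL(2,\bC)^d$ then some pair of factors is linked: there exist indices $i\neq j$ and $P\in GL(2,\bC)$ so that $p_{ij}(H)^0=\{(A,PAP^{-1})\}$. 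Computing the normalizer of this twisted diagonal shows $p_{ij}(H)\subseteq\{(B,\varepsilon PBP^{-1}):\varepsilon=\pm1\}$, the two components arising from the center $\{\pm I\}$. Hence there is a homomorphism $\varepsilon\colon\Gamma\to\{\pm1\}$ with $\rho_j(g)=\varepsilon(g)\,P\,\rho_i(g)\,P^{-1}$ for all $g\in\Gamma$.

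I would then rule out such a linking using traces. Taking traces of the displayed identity gives $\iota_j(\mathrm{tr}(g))=\varepsilon(g)\,\iota_i(\mathrm{tr}(g))$, and squaring yields $\iota_j(\mathrm{tr}(g)^2)=\iota_i(\mathrm{tr}(g)^2)$ for every $g\in\Gamma$. Thus the distinct embeddings $\iota_i,\iota_j$ agree on the subfield of $\bk$ generated by all squared traces. This is exactly where the hypothesis on $\lambda$ enters: writing the eigenvalues of the given element as $\lambda,\lambda^{-1}$, we have $\mathrm{tr}(g)^2-2=\lambda^2+\lambda^{-2}$, so $\bQ[\mathrm{tr}(g)^2]=\bQ[\lambda^2+\lambda^{-2}]=\bk$. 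Hence the field generated by the squared traces is all of $\bk$, on which $\iota_i\neq\iota_j$ — a contradiction. Therefore no two factors are linked, $H=SL(2,\bC)^d$, and $\rho(\Gamma)$ is Zariski dense.

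I expect the main obstacle to be the precise form of the structure theorem together with the careful bookkeeping of the center of $SL(2)$. It is the possibly nontrivial sign character $\varepsilon$ that forces one to replace the naive identity $\iota_i(\mathrm{tr}(g))=\iota_j(\mathrm{tr}(g))$ by its squared version, and it is precisely this \emph{projective} linking that the $\lambda$-hypothesis is designed to exclude; without it one could have $\iota_i=\iota_j$ on the subfield generated by squared traces even though $\iota_i\neq\iota_j$ on $\bk$, and the Zariski closure could collapse onto a twisted diagonal. Alternatively, one can bypass reproving the structure theorem by quoting Geninska's Proposition 2.1 and Corollary 2.2 directly, as in the statement.
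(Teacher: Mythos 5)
Your proposal is correct, but it takes a genuinely different route from the paper's. Both arguments share the same starting point: reduce to showing that the (complexified) Lie algebra $\mathfrak{g}$ of the Zariski closure is all of $\bigoplus_{i=1}^d \mathfrak{sl}(2,\bC)$, and use non-elementarity to see that $\mathfrak{g}$ surjects onto each factor (the paper phrases this via irreducibility of the adjoint action of $\rho_i(\Gamma)$, you via ``non-elementary $\Rightarrow$ not virtually solvable $\Rightarrow$ Zariski dense in $SL(2,\bC)$''). After that the proofs diverge. The paper stays with the single element $g$ from the hypothesis and argues by linear algebra: since $\bk=\bQ[\lambda^2+\lambda^{-2}]$, the values $\iota_i(\lambda^2+\lambda^{-2})$ are pairwise distinct, so $\Ad(\rho(g))$ acting on $\bigoplus_{i=1}^d \mathfrak{sl}(2,\bC)$ has, for each $i$, a simple eigenvalue whose eigenvector is supported in coordinate $i$ alone; the spectral projection onto that eigenline is a polynomial in $\Ad(\rho(g))$, hence preserves $\mathfrak{g}$, so $\mathfrak{g}$ contains a nonzero vector supported in coordinate $i$, and irreducibility finishes the job. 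You instead invoke the Goursat-type structure theorem for subalgebras of $\mathfrak{sl}_2^{\oplus d}$ surjecting onto each simple factor, reduce failure of density to a projective linking $\rho_j(\cdot)=\varepsilon(\cdot)\,P\rho_i(\cdot)P^{-1}$ of two factors, and kill it with the trace identity $\iota_j(\mathrm{tr}(g)^2)=\iota_i(\mathrm{tr}(g)^2)$, which by the hypothesis forces the distinct embeddings $\iota_i,\iota_j$ to agree on all of $\bk$. What each approach buys: the paper's spectral-projection argument is self-contained and sidesteps both the structure theorem and the bookkeeping of the center (passing to $\Ad$ automatically kills the sign ambiguity that you must track with the character $\varepsilon$); your argument makes the role of the hypothesis conceptually transparent---it says exactly that the field generated by squared traces (the invariant trace field) is all of $\bk$, which is precisely what excludes linking between Galois-conjugate factors---and it is closer in spirit to Geninska's original treatment, which the paper cites as the source of the statement. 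The ingredients you leave unproved (the structure theorem for subalgebras surjecting onto simple factors, closedness of images of algebraic-group morphisms, the normalizer computation for the twisted diagonal) are all standard, so I see no gap.
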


\begin{proof}
We provide a sketch for convenience. 

Let $\mathfrak{g}$ be the complexification of the Lie algebra of the Zariski closure of $\rho(\Gamma)\subset SL(2, \bR)^d$. It suffices  to show $\mathfrak{g}=\bigoplus_{i=1}^d \mathfrak{sl}(2, \bC)$. 

Step 1: Let $\pi_i: \bigoplus_{i=1}^d \mathfrak{sl}(2, \bC)\to \mathfrak{sl}(2, \bC)$ be the projection onto the $i$-th factor.  For each $i$, we have that $\pi_i(\mathfrak{g})=\mathfrak{sl}(2, \bC)$. This follows since the Adjoint action of $\rho(\Gamma)$ on each coordinate of $\bigoplus_{i=1}^d \mathfrak{sl}(2, \bC)$ has no invariant subspaces, and $\mathfrak{g}$ is an invariant subspace for the Adjoint action of $\rho(\Gamma)$ on $\bigoplus_{i=1}^d \mathfrak{sl}(2, \bC)$.  

Step 2: For each $i$, we have that $\mathfrak{g}$ contains an element which is non-zero only in coordinate $i$. This follows from the previous result, and the following observations. $\Ad(g)$ has eigenvalues $\lambda^2, \lambda^{-2}$ and 1.  Thus if $g\in \Gamma$ has an eigenvalue $\lambda$ with $\bk=\bQ[\lambda^2+\lambda^{-2}]$, then $\Ad(\rho(g))$ has a simple eigenvector that is non-zero only in coordinate $i$. Basic linear algebra gives that the projection onto the line spanned by this eigenvector can be written as a polynomial in $\Ad(\rho(g))$. 

Step 3: Again using that $\Ad(\rho(\Gamma))$ acting on each $\mathfrak{sl}(2, \bC)$ coordinate has no invariant subspaces, we get the result. 
\end{proof}

\begin{rem}
By the result of Kenyon-Smillie mentioned above, when $SL(X,\omega)$ contains a hyperbolic element then $SL(X,\omega)$ can be conjugated into $SL(2, \bk)$, where $\bk$ is the holonomy field of $(X,\omega)$. Furthermore, if $g\in SL(X, \omega)$ is hyperbolic with eigenvalues $\lambda>1$ and $\lambda^{-1}$, then since $g^2$ is also hyperbolic $\bk=\bQ[tr(g^2)]=\bQ[\lambda^2+\lambda^{-2}]$. Hence Proposition~\ref{P:Zdense} is applicable to non-elementary Veech groups. 
\end{rem}

\section{Proof of Theorem \ref{T:mother}}\label{S:proof}

For the duration of this section, in order to find a contradiction, we assume that $\cM$ is a non-arithmetic rank 1 affine invariant submanifold,  that completely parabolic surfaces are dense in $\cM$, and that $\cM$ is not a closed orbit. Let $\bk=\bk(\cM)$ be the  (affine) field of definition of $\cM$. The definition of non-arithmetic means that $\bk \neq \bQ$.
We denote the field embeddings of $\bk$ into $\bR$ by $\iota_1, \ldots, \iota_d$.
\begin{lem}\label{lem:exist:CP:stab}
There exists a completely parabolic surface $(X,\omega)\in \cM$ that is $\cM$-stably periodic in the horizontal direction, has relative periods in $\bk[\imath]$, and satisfies the following property. There exist a pair of horizontal cylinders $C_1, C_2$ on $(X,\omega)$ such that the ratio of the  moduli of $C_1$ and $C_2$ is not  constant under small rel deformations in $\cM$. 
\end{lem}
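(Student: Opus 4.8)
The plan is to produce the desired surface in two stages: first arrange the three ``soft'' conditions (complete parabolicity, relative periods in $\bk[\imath]$, and horizontal $\cM$-stable periodicity), and then show that on any such surface the last condition about moduli is automatic. Since completely parabolic surfaces are dense in $\cM$, I would pick one, apply Lemma~\ref{L:relperiods} to bring it by a small element of $GL(2,\bR)$ to a completely parabolic surface all of whose absolute and relative periods lie in $\bk[\imath]$ (complete parabolicity is $GL(2,\bR)$-invariant, so this is harmless), and then use that it is completely periodic: a periodic direction has core curve with holonomy in $\bk\times\bk\subset\bR^2$, so some element of $GL(2,\bk)$ carries it to the horizontal. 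As $GL(2,\bk)\subset GL(2,\bR)$ preserves $\bk[\imath]$, $\cM$, and complete parabolicity, I may then assume $(X,\omega)$ is completely parabolic, horizontally periodic, with relative periods in $\bk[\imath]$.

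It remains to guarantee that the horizontal direction is $\cM$-stable, and this is the delicate point I expect to be the \textbf{main obstacle}. Being $\cM$-unstable in the horizontal direction is cut out, within the horizontally periodic surfaces of a fixed cylinder diagram, by finitely many linear conditions in period coordinates that do not vanish identically on $\ker(p)\cap T_{(X,\omega)}(\cM)$, as in the last remark following the definition of $\cM$-stable periodicity. I would exploit the density of completely parabolic surfaces against this proper subvariety to select a completely parabolic horizontally periodic representative in the stable locus; reconciling this selection with horizontal periodicity, which is itself a non-generic condition, is where the argument requires genuine care.

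\textbf{Automatic non-constancy of a modulus ratio.} Assume now $(X,\omega)$ has the first three properties, and suppose toward a contradiction that \emph{every} pair of horizontal cylinders has modulus ratio constant under all small rel deformations in $\cM$. Each circumference $c_i$ is an absolute period, hence fixed along any rel deformation, while each height $h_i=\Im\int_{\delta_i}\omega$ (with $\delta_i$ a transversal of $C_i$) varies linearly; since the ratio of moduli of $C_i$ and $C_j$ equals $h_ic_j/(h_jc_i)$, constancy of all these ratios forces, for every $v\in R:=\ker(p)\cap T_{(X,\omega)}(\cM)$, a single constant $\mu=\mu(v)$ with $\Im(v(\delta_i))=\mu\,h_i$ for all $i$. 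But the area $\sum_i h_ic_i$ depends only on the absolute cohomology class of $\omega$, hence is rel-invariant; substituting the relation shows it scales by $1+t\mu$ along $tv$, so $\mu=0$. Thus $\Im(v(\delta_i))=0$ for all $i$ and all $v\in R$.

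\textbf{Deducing $R=0$.} Here I would use that $T_{(X,\omega)}(\cM)$, and hence $R$, is a complex-linear subspace (a standard property of affine invariant submanifolds), so $\imath v\in R$ whenever $v\in R$. Applying the previous conclusion to $\imath v$ gives $\Re(v(\delta_i))=\Im(\imath v(\delta_i))=0$, whence $v(\delta_i)=0$ for every transversal. Horizontal $\cM$-stable periodicity means every horizontal saddle connection $\sigma$ remains horizontal to first order in $\cM$, i.e.\ $\Im(v(\sigma))=0$ for $v\in R$; applying this to $\imath v$ as well yields $v(\sigma)=0$ for every horizontal saddle connection. Since $v\in\ker(p)$ already annihilates all absolute classes, and since the transversals together with the horizontal saddle connections generate $H_1(X,\Sigma,\bZ)$, I conclude $v=0$, so $R=0$; by Proposition~\ref{P:SL2plusrel} this forces $\cM$ to be a closed orbit, a contradiction. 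The two ingredients I regard as essential are the rel-invariance of area (which converts ``all modulus ratios constant'' into ``all heights fixed'') and the complex-linearity of $R$ (which upgrades vanishing of imaginary parts to genuine vanishing); the homological spanning statement is routine.
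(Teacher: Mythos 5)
Your Stage 2 (the ``automatic non-constancy'' of some modulus ratio) is essentially correct, and is in fact close in spirit to the paper's own final step: the paper takes a nontrivial purely imaginary rel deformation (which exists by Proposition~\ref{P:SL2plusrel} because $\cM$ is not a closed orbit), and uses rel-invariance of the area $\sum_i h_ic_i$ to find one cylinder whose height increases and one whose height decreases; your contrapositive version via ``all heights scale by a common factor $\mu$, area forces $\mu=0$, complex linearity and stability force the rel space to vanish'' carries the same content. The genuine gap is exactly where you flag it, in Stage 1: producing a surface that is simultaneously completely parabolic \emph{and} $\cM$-stably periodic. Your plan --- start from a completely parabolic surface and then ``exploit density against the unstable locus'' --- cannot be completed as stated. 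The genericity of stability (the remark following the definition) concerns rel deformations of a \emph{fixed} horizontally periodic surface: a generic small $v$ makes $(X,\omega)+v$ stably periodic, but any such perturbation destroys complete parabolicity. Conversely, the completely parabolic surfaces that are dense in $\cM$ need not be periodic in the direction you have arranged; horizontal periodicity is itself a positive-codimension condition, so density in $\cM$ says nothing about density within the locus of horizontally periodic surfaces, and there is no ``proper subvariety'' of $\cM$ to avoid.

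The missing idea is to reverse the order of quantifiers: find the stable direction first and the completely parabolic surface second. Start with \emph{any} surface in $\cM$ having a periodic direction; a generic small rel deformation of it is $\cM$-stably periodic in that same direction, so some $(X',\omega')\in\cM$ is $\cM$-stably periodic. Now use that stable periodicity in a \emph{fixed} direction is an \emph{open} condition: there is a neighborhood $U$ of $(X',\omega')$ in $\cM$ on which the cylinders in that direction persist and the direction remains $\cM$-stably periodic. Density of completely parabolic surfaces then produces a completely parabolic $(X,\omega)\in U$, which \emph{inherits} stable periodicity in that direction for free. Finally, apply an element of $GL(2,\bR)$ to rotate the stable direction to the horizontal and (as in the proof of Lemma~\ref{L:relperiods}, using that $SL(X,\omega)$ contains non-commuting parabolics) to place all absolute and relative periods in $\bk[\imath]$; this is legitimate since $GL(2,\bR)$ preserves $\cM$, complete parabolicity, and stable periodicity. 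With this reordering, your Stage 2 then completes the proof.
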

\begin{proof}
Given any periodic direction on any surface in $\cM$, a generic small deformation of this surface will be $\cM$-stably periodic in the same direction. Thus there is a surface $(X', \omega')\in \cM$ with an $\cM$-stably periodic direction. There is small neighborhood $U$ of $(X', \omega')$ where the cylinders in this direction persist, and the direction stays $\cM$-stably periodic.
Since we have assumed that completely parabolic surfaces are dense in $\cM$, there must be a completely parabolic surface $(X, \omega)$ in $U$. After applying an element in $GL(2, \bR)$, we may assume that the $\cM$-stably periodic direction on $(X, \omega)$ is horizontal and that the absolute and hence also relative periods lie in $\bk[i]$; compare to the proof of Lemma \ref{L:relperiods}.
\par
By Proposition \ref{P:SL2plusrel}, there are rel deformations of $(X,\omega)$ which remain in $\cM$.
For any non-trivial imaginary rel deformation of an $\cM$-stably horizontally periodic surface, there is a cylinder that increases in height, and another that decreases in height. Two such cylinders can be chosen as $C_1$ and $C_2$. Since rel deformations do not change circumference, the ratio of moduli of $C_1$ and $C_2$ changes under the imaginary rel deformation.
\end{proof}
Let $(X,\omega)\in \cM$, and $C_1,C_2\subset X$  be as in Lemma~\ref{lem:exist:CP:stab}. For $v$ in a neighborhood of $0$ in $\ker(p)\cap T_{(X,\omega)}(\cM)$, define $f(v)$ to be the ratio of the moduli of $C_1$ and $C_2$. Note that $f$ is the quotient of two degree 1 polynomials  in the {\em imaginary coordinates} of $v$ with coefficients in $\bk(\cM)$. More precisely, if  the heights and circumferences of $C_i$ at $(X, \omega)$ are denoted by  $h_i$ and $c_i$ respectively, then there are   linear functionals $\alpha_1,\alpha_2$ with coefficients in $\bk$ such that
$$
f(v) = \frac{c_2}{c_1}\cdot\frac{h_1+\alpha_1(\mathrm{Im}(v))}{h_2+\alpha_2(\mathrm{Im}(v))}.
$$
Using this formula as a definition, we extend $f$ to all of $\ker(p)\cap T_{(X,\omega)}(\cM)$. 
\begin{rem}
For large $v$, $f(v)$ may not have geometric meaning. There are several reasons for this: the cylinders $C_1$ and $C_2$ may not persist on $(X, \omega)+v$, or worse yet, $(X, \omega)+v$ may not even be well defined (zeroes may collide). Furthermore, if a path $(X, \omega)+tv$ ceases to be $\cM$-stably periodic at some $t=t_0$, then $f(tv)$ may not represent the ratio of the moduli of $C_1$ and $C_2$ for $t>t_0$, even if $C_1$ and $C_2$ persist. This is because some zero can ``hit" the boundary of the $C_i$. (The heights of cylinders are only piecewise linear functions.)
\end{rem}
\par
Define
$$
P = \left\{ v\in \ker(p)\cap T_{(X,\omega)}(\cM):f(tv)=f(0)\textrm{ for all } t\in \bR \right\}.
$$
Observe that $P$ is the kernel of $\alpha_1 h_2-\alpha_2h_1$, so it is a real hyperplane of $\ker(p)\cap T_{(X,\omega)}(\cM)$. 
\par
\begin{lem}\label{L:polys}
Let $v\in \ker(p)\cap T_{(X,\omega)}(\cM) \cap H^1(X,\Sigma, \bk[i])$ be a vector satisfying $v\notin P$. There exists a collection of $d-1$ polynomials (depending on $v$) of degree at most 2 in the $4d$ entries of $\rho(g)$ such that for any $g\in \mathrm{SL}(2,\bk)$, if any of these polynomials are non-zero at $\rho(g)$, then $f(gv)\not \in\bQ$.  Moreover, these polynomials do not vanish identically on $\rho(SL(2,\bk))$.
\end{lem}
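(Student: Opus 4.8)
The plan is to write $f(gv)$ as an explicit element of the field $\bk$, depending only on the bottom row of $g$, and then to detect its irrationality by comparing it with its Galois conjugates $\iota_i(f(gv))$; the polynomials $Q_i$ will arise as the numerators of the differences $\iota_i(f(gv))-\iota_1(f(gv))$ after clearing denominators. First I would record how $g$ acts: writing $g=\begin{pmatrix} a & b \\ c & d\end{pmatrix}\in \mathrm{SL}(2,\bk)$ and using that $g$ acts on each coordinate of $v\in\ker(p)\cong\bC^{s-1}$ by the standard linear action on $\bC\cong\bR^2$, one has $\mathrm{Im}(gv)=c\,\mathrm{Re}(v)+d\,\mathrm{Im}(v)$. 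Since $v\in H^1(X,\Sigma,\bk[i])$, both $\mathrm{Re}(v)$ and $\mathrm{Im}(v)$ have coordinates in $\bk$; as $\alpha_1,\alpha_2$ have coefficients in $\bk$ and the heights $h_j$ and circumferences $c_j$ lie in $\bk$, the quantities
\[
N_j(g):=h_j+\alpha_j(\mathrm{Im}(gv))=h_j+c\,\alpha_j(\mathrm{Re}(v))+d\,\alpha_j(\mathrm{Im}(v))
\]
lie in $\bk$, and $f(gv)=\tfrac{c_2}{c_1}\cdot\tfrac{N_1(g)}{N_2(g)}\in\bk$ whenever $N_2(g)\neq 0$. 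The point to retain is that $N_j(g)$ is affine in the entries $c=g_{21},\,d=g_{22}$ of $\rho_1(g)=g$, so $\iota_i(N_j(g))$ is affine in the entries $(\rho_i(g))_{21},(\rho_i(g))_{22}$ of $\rho_i(g)$.

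Next I would use that an element $x\in\bk$ is rational if and only if $\iota_i(x)=x$ for all $i$ (the $\iota_i(x)$ being the conjugates of $x$, since $\bk$ is totally real of degree $d$). Clearing denominators, I set for $i=2,\dots,d$
\[
Q_i:=\iota_i(c_2)\,c_1\,\iota_i(N_1(g))\,N_2(g)-c_2\,\iota_i(c_1)\,N_1(g)\,\iota_i(N_2(g)).
\]
Because $N_1(g),N_2(g)$ are affine in the entries of $\rho_1(g)$ while $\iota_i(N_1(g)),\iota_i(N_2(g))$ are affine in the entries of $\rho_i(g)$, and for $i\neq 1$ these are distinct among the $4d$ coordinates, each $Q_i$ has degree at most $2$; this gives the required $d-1$ polynomials. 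By construction $Q_i=\big(\iota_i(f(gv))-\iota_1(f(gv))\big)\,D_i$ with $D_i=c_1\,\iota_i(c_1)\,N_2(g)\,\iota_i(N_2(g))$. For the implication, suppose $f(gv)\in\bQ$: then it is finite, so $N_2(g)\neq 0$, whence $\iota_i(N_2(g))\neq 0$ since $\iota_i$ is a field embedding, so $D_i\neq 0$; and $\iota_i(f(gv))=f(gv)$ because $\iota_i$ fixes $\bQ$, so the difference vanishes and $Q_i=0$. Thus $Q_i(\rho(g))\neq 0$ forces $f(gv)\notin\bQ$.

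For the non-vanishing I would invoke the hypothesis $v\notin P$. Taking $g=\mathrm{diag}(s^{-1},s)$ with $s\in\bk^\times$ yields $\mathrm{Im}(gv)=s\,\mathrm{Im}(v)$, so $f(gv)=\psi(s)$ with
\[
\psi(s)=\frac{c_2}{c_1}\cdot\frac{h_1+s\,\alpha_1(\mathrm{Im}(v))}{h_2+s\,\alpha_2(\mathrm{Im}(v))}.
\]
The condition $v\notin P$ says exactly that $h_2\,\alpha_1(\mathrm{Im}(v))-h_1\,\alpha_2(\mathrm{Im}(v))\neq 0$, i.e. that $\psi$ is a non-constant M\"obius transformation with coefficients in $\bk$. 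Such a $\psi$ is a bijection of $\bP^1(\bk)=\bk\cup\{\infty\}$, so every element of $\bk$ except possibly $\psi(\infty)$ equals $\psi(s)$ for some finite $s\in\bk$. Choosing $r\in\bk\setminus\bQ$ (possible since $d\geq 2$) different from both $\psi(\infty)$ and $\psi(0)$, and putting $s_*=\psi^{-1}(r)\in\bk^\times$, the element $g_*=\mathrm{diag}(s_*^{-1},s_*)\in\mathrm{SL}(2,\bk)$ satisfies $f(g_*v)=r\notin\bQ$; hence some $Q_i(\rho(g_*))\neq 0$, and the $Q_i$ do not all vanish identically on $\rho(\mathrm{SL}(2,\bk))$.

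I expect the only delicate step to be this last one: one must verify that $v\notin P$ is precisely the non-degeneracy making $\psi$ non-constant, and then produce a genuine element of $\mathrm{SL}(2,\bk)$ realizing an irrational value of $f$. The remaining ingredients — that $f(gv)\in\bk$, the degree-two bound, and the conjugate criterion for rationality — are routine.
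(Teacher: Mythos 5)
Your proof is correct and follows essentially the same route as the paper's: detect rationality of $f(gv)\in\bk$ via the Galois-conjugate equations $\iota_i(f(gv))=f(gv)$, clear denominators to obtain $d-1$ quadratic polynomials in the entries of $\rho(g)$, and use the diagonal matrices $\mathrm{diag}(t^{-1},t)$ together with $v\notin P$ for the non-vanishing. The only difference is one of detail: where the paper simply asserts that a non-constant ratio of degree-one polynomials over $\bk\neq\bQ$ cannot take only rational values, you prove this by noting that a non-constant M\"obius map with coefficients in $\bk$ is a bijection of $\bP^1(\bk)$ and explicitly producing an element $g_*$ with $f(g_*v)\notin\bQ$.
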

\begin{proof}
The condition that $f(gv)\in \bQ$ is equivalent to the $d-1$ equations
$$\iota_i(f(gv)) = \iota_{i+1}(f(gv)), \quad i=1, \ldots, d-1.$$
Since $f$ is the ratio of two degree one polynomials, by clearing denominators in each of these equations we get a system of $d-1$ quadratic equations in the coefficients of $\rho(g)$ that vanish whenever $f(gv)\in \bQ$.
\par
It remains to show that these polynomials do not vanish identically on $\rho(SL(2,\bk))$. For this, consider the matrices
$$
a_t=\left(\begin{array}{cc} t^{-1} & 0\\0 &t\end{array}\right)
$$
for $t\in \bk^*$. Since $f(v)$ depends only on the imaginary parts of $v$, we get that for any $t\in \bk^*$, $f(a_t v) = f(tv)$. Since $v\notin P$ the function $t\mapsto f(tv)$ is not constant. Recall that $\bk\neq \bQ$, hence any non-constant function $\bk \to \bR$ defined as the ratio of two degree one polynomials cannot always take rational values. This proves the lemma.
\end{proof}
\par
For any $v\in \ker(p)\cap T_{(X,\omega)}(\cM) \cap H^1(X,\Sigma, \bk[i])$ with $v\notin P$, we will denote the algebraic subvariety of $SL(2,\bR)^d$ defined by the polynomials in Lemma~\ref{L:polys} by $\cX(v)$. In particular $\rho(SL(2,\bk))  \not \subset \cX(v)$.
 \par
\begin{lem}\label{L:findg}
There exists $R>0$ such for any $v\in \ker(p)\cap T_{(X,\omega)}(\cM)\cap H^1(X,\Sigma, \bk[i])$ with $v\notin P$, there exists $g\in SL(X,\omega)$ such that $\|g\|\leq R$ and $\rho(g) \not \in \cX(v)$ {\em i.e.} $f(gv)\notin \bQ$.
\end{lem}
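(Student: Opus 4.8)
The plan is to combine the Zariski density of the image of the Veech group with the Eskin--Mozes--Oh result on uniform escape from subvarieties. The whole difficulty lies in the word \emph{uniform}: the subvariety $\cX(v)$ changes with $v$, but every $\cX(v)$ is cut out by $d-1$ polynomials of degree at most $2$ in the $4d$ matrix entries, so the $\cX(v)$ have uniformly bounded degree, and this is exactly the input that allows a single radius $R$ to work for all $v$ at once.

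First I would check that $\rho(SL(X,\omega))$ is Zariski dense in $SL(2,\bR)^d$. Since $(X,\omega)$ is completely parabolic it is periodic in every cylinder direction, and the commensurable moduli in each such direction produce a parabolic element of $SL(X,\omega)$ fixing that direction. Taking the horizontal direction together with any non-parallel cylinder direction yields two non-commuting parabolics, so $SL(X,\omega)$ is non-elementary and in particular contains a hyperbolic element. By the Kenyon--Smillie result recalled in the remark after Proposition~\ref{P:Zdense}, $SL(X,\omega)$ may then be conjugated into $SL(2,\bk)$ and contains a hyperbolic $g$ with eigenvalue $\lambda$ satisfying $\bk=\bQ[\lambda^2+\lambda^{-2}]$. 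Hence Proposition~\ref{P:Zdense} applies and gives the Zariski density.

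Next I would record that for each $v$ as in the lemma the variety $\cX(v)$ is a \emph{proper} subvariety of $SL(2,\bR)^d$ of degree bounded independently of $v$. Properness is immediate from the last assertion of Lemma~\ref{L:polys}: the defining polynomials do not vanish identically on $\rho(SL(2,\bk))$, so $\cX(v)\neq SL(2,\bR)^d$. The degree bound is a Bezout-type estimate, since $d-1$ equations of degree at most $2$ define a variety of degree at most a constant $D=D(d)$, uniformly in $v$. By the Zariski density just established, $\rho(SL(X,\omega))$ is not contained in the proper subvariety $\cX(v)$.

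Finally I would invoke the Eskin--Mozes--Oh escape result. Because Zariski density is already attained by a finitely generated subgroup (the Zariski closures of the finitely generated subgroups of $SL(X,\omega)$ stabilize, by Noetherianity of the Zariski topology), choose a finitely generated $\Gamma_0\subset SL(X,\omega)$ with $\rho(\Gamma_0)$ still Zariski dense. Their theorem then produces a radius $R=R(\rho(\Gamma_0),D)$, independent of $v$, such that the finite set $\{\gamma\in\rho(\Gamma_0):\|\gamma\|\leq R\}$ is not contained in any proper subvariety of $SL(2,\bR)^d$ of degree at most $D$. Applied to $\cX(v)$ this furnishes $g\in\Gamma_0\subset SL(X,\omega)$ with $\|\rho(g)\|\leq R$ and $\rho(g)\notin\cX(v)$; since $g=\rho_1(g)$ is a coordinate of $\rho(g)$ we get $\|g\|\leq R$, and $\rho(g)\notin\cX(v)$ is precisely $f(gv)\notin\bQ$. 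The main obstacle is securing $R$ uniformly over the infinite family $\{\cX(v)\}$, which is exactly what the bounded-degree hypothesis in the Eskin--Mozes--Oh result delivers; everything else is bookkeeping.
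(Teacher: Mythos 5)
Your proposal is correct and follows essentially the same route as the paper: Zariski density via Proposition~\ref{P:Zdense} combined with Theorem~\ref{T:EMO}, the uniformity in $v$ coming precisely from the fact that every $\cX(v)$ is cut out by $d-1$ polynomials of degree at most $2$. The paper is slightly more economical about the finitely generated subgroup: it takes $S$ to be two non-commuting parabolic elements of $SL(X,\omega)$ (these exist because $(X,\omega)$ is completely parabolic), applies Proposition~\ref{P:Zdense} directly to $\Gamma=\langle S\rangle$, and sets $R=M^N$ with $M=\max_{h\in S}\|h\|$, which renders both your Noetherian extraction of $\Gamma_0$ and your Bezout bound on the degree of $\cX(v)$ unnecessary, since Theorem~\ref{T:EMO} as quoted is already phrased in terms of the number and degrees of the defining polynomials rather than the degree of the variety. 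One imprecision to correct: you invoke Kenyon--Smillie to \emph{conjugate} $SL(X,\omega)$ into $SL(2,\bk)$, but the argument requires the literal containment $SL(X,\omega)\subset SL(2,\bk)$ --- which holds here because $(X,\omega)$ was chosen in Lemma~\ref{lem:exist:CP:stab} with relative periods in $\bk[i]$, and this is how the paper phrases it --- since $\rho$, $f$, and the polynomials defining $\cX(v)$ are all written in the actual matrix entries of $g$ and the $\bk[i]$-coordinates of $v$; a genuine conjugation would break the implication $\rho(g)\notin\cX(v)\Rightarrow f(gv)\notin\bQ$ of Lemma~\ref{L:polys}.
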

\par

The non-trivial part of Lemma~\ref{L:findg} is the control of the size of  $g$. This is done with the following result of Eskin-Mozes-Oh. The statement presented here is slightly stronger than~\cite[Proposition 3.2]{EMO} (see~\cite[Lemma 2.5]{BG} where the statement below appears).

\par
\begin{thm}[Eskin-Mozes-Oh]\label{T:EMO}
For any $k>0$, there is an $N>0$ such that for any integer $m\geq 1$, any field $\bk$, any 
algebraic subvariety $\cX\subset \mathrm{GL}(m,\bk)$ defined by at most $k$ polynomials of degree at most $k$, and any family $S\subset \mathrm{GL}(m,\bk)$ that generates a subgroup which is not contained in $\mathcal X$, we have $S^N \not \subseteq \mathcal X$.
\end{thm}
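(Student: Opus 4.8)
\textbf{The plan} is to recast escape from $\cX$ as a statement about an ascending chain of finite-dimensional spaces of polynomials, so that the escape length is controlled by a single dimension count depending only on the complexity parameter $k$ (and the ambient size $m$), uniformly over the field $\bk$, the subvariety $\cX$, and the generating family $S$. Throughout I regard the defining equations as genuine polynomials in the $m^2$ matrix entries, and I read $S^N$ as products of at most $N$ elements of $S$; after enlarging $S$ to contain the identity (which only makes escape harder) this agrees with products of exactly $N$ elements. The first reduction is to a \emph{single} hypersurface. Writing $\cX=\bigcap_{i=1}^r\{p_i=0\}$ with $r\le k$ and $\deg p_i\le k$, a word $w$ lies outside $\cX$ exactly when $p_i(w)\neq 0$ for some $i$; since $\langle S\rangle\not\subseteq\cX$ there is an index $i_0$ with $\langle S\rangle\not\subseteq\{p_{i_0}=0\}$, and producing a short word with $p_{i_0}(w)\neq0$ gives $w\notin\cX$. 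Thus it suffices to treat a single polynomial $p$ of degree $d\le k$. It is essential here that $\cX$ is an \emph{intersection} of hypersurfaces, not a union.

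Next I would set up the relevant representation. Let $W$ be the space of polynomials of degree at most $d$ in the $m^2$ entries, so $\dim W=\binom{m^2+d}{d}$, and let $\pi$ be the representation of $\mathrm{GL}(m,\bk)$ on $W$ defined by $(\pi(g)q)(x)=q(xg)$; one checks that $\pi$ preserves $W$ and that $\pi(g_1g_2)=\pi(g_1)\pi(g_2)$. Let $\delta\in W^*$ be evaluation at the identity, so $p(w)=\langle\delta,\pi(w)p\rangle$ for every word $w$. Setting $\cL_N=\operatorname{span}\{\pi(w)p:w\in S^{\le N}\}\subseteq W$ and $\cL_\infty=\operatorname{span}\{\pi(\gamma)p:\gamma\in\langle S\rangle\}$, the hypothesis $S^{\le N}\subseteq\{p=0\}$ becomes $\delta\perp\cL_N$, while $\langle S\rangle\not\subseteq\{p=0\}$ becomes $\delta\not\perp\cL_\infty$. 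The key point is that stabilization of this chain forces escape: the subspaces ascend with $\cL_{N+1}=\cL_N+\sum_{s\in S}\pi(s)\cL_N$, and if $\cL_N=\cL_{N+1}$ then $\pi(s)\cL_N\subseteq\cL_N$ for all $s\in S$; as each $\pi(s)$ is an invertible map of the finite-dimensional $W$, these inclusions are equalities, so $\cL_N$ is $\pi(s)$- and $\pi(s^{-1})$-invariant for all $s$, hence $\pi(\langle S\rangle)$-invariant, giving $\cL_N=\cL_\infty$.

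Combining these observations yields the bound. If $S^{\le N}\subseteq\{p=0\}$ then $\delta\perp\cL_N$ but $\delta\not\perp\cL_\infty$, so $\cL_N\neq\cL_\infty$ and the chain has not stabilized by stage $N$; consequently every inclusion $\cL_0\subsetneq\cL_1\subsetneq\cdots\subsetneq\cL_N$ is strict. Since $\dim\cL_0=1$ (the case $p\equiv0$ is excluded, as it would force $\langle S\rangle\subseteq\{p=0\}$), this gives $\dim\cL_N\ge N+1$, while $\dim\cL_N\le\dim W$; hence $N\le\binom{m^2+d}{d}-1$. Therefore taking $N=\binom{m^2+k}{k}$ makes $S^N\subseteq\{p=0\}$ impossible, and by the reduction of the first paragraph the same $N$ defeats $\cX$. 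This $N$ depends only on $k$ and the ambient dimension $m$, and is uniform in $\bk$, in $\cX$ among varieties of the stated complexity, and in the (possibly infinite) family $S$.

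The real content is precisely this uniformity, and the delicate point is that the bound must not see the field $\bk$, the particular equations, or the cardinality of $S$. The span argument is well suited to this because it funnels all of these into the single quantity $\dim W$, a function of the complexity data alone; in particular it accommodates infinite $S$ (each $\cL_N$ is still finite-dimensional) and makes no symmetry assumption on $S$. The one genuinely non-formal step, and the one I expect to be the crux, is the upgrade from $\pi(s)$-invariance to full $\pi(\langle S\rangle)$-invariance, where invertibility of $\pi(s)$ and finite-dimensionality of $W$ are used. I would also flag that the explicit dependence of $N$ on the ambient dimension $m$ appears essential (a single coordinate function on a large cyclic group of cyclic-shift matrices requires escape length growing with $m$); the crucial uniformity is that $N$ is otherwise independent of the field, the variety, and the generating set, which is exactly what the statement is invoked for, with $m$ fixed, in Lemma~\ref{L:findg}.
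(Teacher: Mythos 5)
Your proof is correct, but note that the paper contains no proof of Theorem~\ref{T:EMO} at all: it is quoted from \cite[Proposition 3.2]{EMO}, in the strengthened form recorded in \cite[Lemma 2.5]{BG}, so the comparison is with those sources. There the argument is geometric: one intersects $\cX$ with its translates under elements of $S$, producing a descending chain of subvarieties, and a generalized Bezout theorem controls the dimension, degree, and number of irreducible components at each stage, hence the length of the chain. Your route is genuinely different and more elementary: after the (valid) reduction to a single hypersurface $\{p=0\}$, you linearize via the right-translation representation $\pi$ on the space $W$ of polynomials of degree at most $k$, and run an ascending chain of spans $\cL_0\subsetneq\cL_1\subsetneq\cdots$ bounded by $\dim W$. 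All the steps check out; in particular the crux you identify --- upgrading $\pi(s)\cL_N\subseteq\cL_N$ to equality, hence to $\pi(\langle S\rangle)$-invariance, using invertibility of $\pi(s)$ in finite dimension --- is exactly right, and the identity $p(w)=\langle\delta,\pi(w)p\rangle$ correctly converts vanishing on the ball into $\delta\perp\cL_N$. What your approach buys over the Bezout descent is a fully explicit bound $N=\binom{m^2+k}{k}$, uniform in the field and valid for infinite, non-symmetric $S$, which comfortably covers the strengthened form the paper needs.

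You are also right to flag the quantifier on $m$: the statement as printed, with $N$ depending on $k$ alone and uniform over all $m\geq 1$, is in fact false, and your cyclic-shift heuristic can be made precise in a way that is robust to the word-ball convention. Take $\sigma\in GL(m,\bQ)$ the cyclic shift, $S=\{1,\sigma,\sigma^{-1}\}$, and $\cX=\{x_{1,\,1+\lfloor m/2\rfloor}=0\}$, a single polynomial of degree one; then $\sigma^t\notin\cX$ only when $t\equiv-\lfloor m/2\rfloor \pmod m$, so $S^N\subseteq\cX$ for all $N<\lfloor m/2\rfloor$ even though $\langle S\rangle\not\subseteq\cX$. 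Hence any valid $N$ must grow with $m$ already for $k=1$, and the correct formulation (as in \cite[Lemma 2.5]{BG}) lets $N$ depend on the matrix size as well as on $k$. This is harmless for the paper: in Lemma~\ref{L:findg} the theorem is invoked with $m=2d$ fixed, and what matters is only uniformity over the varieties $\cX(v)$ as $v$ varies --- precisely the uniformity your dimension count delivers.
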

\par
Here $S^N$ denotes the word ball of radius $N$.
\par
\begin{proof}[Proof of Lemma~\ref{L:findg}]
Let $S$ be a set of two non-commuting parabolic elements in $SL(X, \omega)$, and let $\Gamma$ be the subgroup generated by $S$. Since the relative periods of $(X,\omega)$ are in $\bk[i]$ we have $SL(X, \omega)\subset SL(2,\bk)$. By Proposition \ref{P:Zdense}, $\rho(\Gamma)$ is Zariski dense in $SL(2,\bR)^d$. 

Let $v$ be as in the statement of the lemma. By Lemma \ref{L:polys} we know that $\rho(SL(2,\bk))$  is not contained in the subvariety $\mathcal {X}(v)$ of $SL(2,\bR)^d\subset GL(2d,\bR)$. Thus the subgroup $\Gamma$ is not contained in this variety. Theorem \ref{T:EMO} then implies that there is an $N$ independent of $v$ such that there exists an element $g \in SL(X,\omega)$ of word length in $S$  at most $N$, such that $\rho(g)$ does not belong to $\mathcal{X}(v)$, which means that $f(gv)\notin \bQ$ by Lemma~\ref{L:polys}.

If $M=\max_{h\in S} \|h\|$, we may set $R=M^N$, and hence $\|g\|\leq R$.
\end{proof}
\begin{cor}\label{cor:v:small:no:cp}
For all sufficiently small elements $v $ in $ \ker(p)\cap T_{(X,\omega)}(\cM)\cap H^1(X,\Sigma,\bk[i])$, if $v\not\in P$ then $(X,\omega)+v$ is not completely parabolic.
\end{cor}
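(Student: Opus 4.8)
The plan is to argue directly, exploiting the uniformity of the constant $R$ from Lemma~\ref{L:findg} to produce a single neighborhood of $0$ on which complete parabolicity of $(X,\omega)+v$ would force $f(gv)$ to be rational, contradicting Lemma~\ref{L:findg}. Since $(X,\omega)$ is horizontally $\cM$-stably periodic, there is a radius $r_0>0$ such that for every $w\in\ker(p)\cap T_{(X,\omega)}(\cM)$ with $\|w\|\leq r_0$ the surface $(X,\omega)+w$ is well defined, remains horizontally $\cM$-stably periodic with the same cylinder diagram, and has $C_1,C_2$ as horizontal cylinders whose ratio of moduli equals $f(w)$; moreover $g\big((X,\omega)+u\big)=(X,\omega)+gu$ whenever both translates are defined, by Lemma~\ref{L:gonrel}. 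First I would set $\Omega$ to be the ball of radius $r_0/R$ in $\ker(p)\cap T_{(X,\omega)}(\cM)$, so that for every $v\in\Omega$ and every $g$ with $\|g\|\leq R$ one has $\|gv\|\leq r_0$. The key point is that $R$ does not depend on $v$, so this single choice of $\Omega$ works uniformly.

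Next I would fix $v\in\Omega\cap H^1(X,\Sigma,\bk[i])$ with $v\notin P$ and suppose, for contradiction, that $(X,\omega)+v$ is completely parabolic. By Lemma~\ref{L:findg} there is $g\in SL(X,\omega)$ with $\|g\|\leq R$ and $f(gv)\notin\bQ$. Note that $gv\in\ker(p)\cap T_{(X,\omega)}(\cM)$, since $g$ fixes $(X,\omega)$, preserves $\ker(p)$, and preserves $T_{(X,\omega)}(\cM)$ by the $GL(2,\bR)$-invariance of $\cM$; and $\|gv\|\leq r_0$ by the choice of $\Omega$. Because $g$ stabilizes $(X,\omega)$, Lemma~\ref{L:gonrel} gives $g\big((X,\omega)+v\big)=(X,\omega)+gv$. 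As complete parabolicity is $GL(2,\bR)$-invariant—the action sends cylinder directions to cylinder directions and preserves the ratio of moduli of any two parallel cylinders—the surface $(X,\omega)+gv$ is completely parabolic.

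The contradiction is then immediate from the definition of completely parabolic. By the choice of $\Omega$, the surface $(X,\omega)+gv$ is horizontally $\cM$-stably periodic, with $C_1$ and $C_2$ among its horizontal cylinders and with ratio of moduli equal to $f(gv)$. Since this surface is horizontally periodic, its horizontal cylinders tile it, so any collection of horizontal cylinders covering the surface must consist of all of them; applying the definition of completely parabolic to the horizontal direction, all horizontal cylinders have pairwise rationally related moduli, and in particular $f(gv)\in\bQ$. This contradicts $f(gv)\notin\bQ$, so $(X,\omega)+v$ is not completely parabolic. As $v$ was an arbitrary element of $\Omega\setminus P$ lying in $H^1(X,\Sigma,\bk[i])$, the corollary follows.

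I expect the main obstacle to be precisely the uniform control of neighborhoods carried out in the first paragraph: one must use that $R$ in Lemma~\ref{L:findg} is independent of $v$ in order to select a single $\Omega$ for which, simultaneously for every admissible $g$, the translate $(X,\omega)+gv$ is well defined, remains $\cM$-stably horizontally periodic with $C_1,C_2$ persisting, and satisfies the identity of Lemma~\ref{L:gonrel}. Once this uniformity is in place, the remaining ingredients—the $GL(2,\bR)$-invariance of complete parabolicity and the rationality of modulus ratios in a periodic direction of a completely parabolic surface—follow directly from the definitions.
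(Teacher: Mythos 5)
Your proof is correct and follows essentially the same route as the paper's: choose a neighborhood (via Lemma~\ref{L:gonrel} and persistence of $C_1,C_2$) scaled down by the uniform constant $R$, apply Lemma~\ref{L:findg} to get $g$ with $f(gv)\notin\bQ$, and use $g((X,\omega)+v)=(X,\omega)+gv$ together with $GL(2,\bR)$-invariance of complete parabolicity to conclude. The only cosmetic difference is that you phrase it as a contradiction argument while the paper concludes directly that $(X,\omega)+v$ is not completely parabolic.
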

\begin{proof}
Let  $R$ be the constant given by Lemma~\ref{L:findg}. Let $U$ be the neighborhood of $0$ in $\ker(p)\cap T_{(X,\omega)}(\cM)$ given by Lemma \ref{L:gonrel} such that
the map $v\mapsto (X,\omega)+v$ is well defined and injective, and such that if $g\in SL(X,\omega)$ and $v\in U$ then $g((X,\omega)+v)=(X,\omega)+gv$. By making $U$ smaller if necessary, we can assume that the  cylinders $C_1$ and $C_2$ persist at $(X,\omega)+v$ for all $v\in U$ and that $f(v)$ gives { the ratio} of the moduli of $C_1$ and $C_2$ at $(X, \omega)+v$. 

Now consider any $v$ as above that is small enough so that $w\in U$ for all $w$ with $\|w\|\leq R\|v\|$.
If $v\not\in P$, Lemma~\ref{L:findg} furnishes an element $g\in SL(X,\omega)$ such that $f(gv)\notin \bQ$ and $\|g\|\leq R$. But $g((X,\omega)+v)=(X,\omega)+gv$.  Since the function $f$ is the ratio of the moduli of $C_1$ and $C_2$,  we see that $(X,\omega)+gv$ has a pair of parallel  cylinders whose ratio of moduli is not rational. Since $(X,\omega)+gv$ belongs to the $SL(2,\bR)$ orbit of $(X,\omega)+v$, we see that $(X, \omega)+v$ is not completely parabolic.
\end{proof}

\begin{proof}[Conclusion of the proof of Theorem \ref{T:mother}]
Recall that the set of completely parabolic surfaces is $GL(2, \bR)$ invariant. Since $\cM$ is rank 1,  if completely parabolic surfaces are dense, then there is a neighborhood $U$ of $0$ in $\ker(p)\cap T_{(X,\omega)}(\cM)$ such that  $(X,\omega)+v$ is completely parabolic for a dense set of $v\in U$.

Since $(X, \omega)$ has  relative periods in $\bk[i]$, if $(X,\omega)+v$ is completely parabolic then it must have relative periods in $\bk[i]$, and hence $v$ has coordinates in $\bk[i]$. By Corollary~\ref{cor:v:small:no:cp}, if $v\notin P$ is small enough and has coordinates in $\bk[i]$, then $(X, \omega)+v$ is not completely parabolic. Since $P$ is a hyperplane, this contradicts the fact that the set of completely parabolic surfaces is dense in $\cM$. Theorem \ref{T:mother} is proved.
\end{proof}



\bibliography{mybib}{}
\bibliographystyle{amsalpha}
\end{document}